\documentclass[a4paper, 10pt]{amsart}
\usepackage[top=80pt,bottom=65pt,left=70pt,right=70pt]{geometry}
\usepackage{graphicx, eepic}
\usepackage{times}
\normalfont
\usepackage{bbm}

\usepackage{amsthm,amsmath,amsfonts,amssymb}
\usepackage{hyperref}
\hypersetup{colorlinks}
\hypersetup{
    bookmarks=true,         
    unicode=false,          
    pdftoolbar=true,        
    pdfmenubar=true,        
    pdffitwindow=false,     
    pdfstartview={FitH},    
    pdftitle={My title},    
    pdfauthor={Author},     
    pdfsubject={Subject},   
    pdfcreator={Creator},   
    pdfproducer={Producer}, 
    pdfkeywords={keyword1} {key2} {key3}, 
    pdfnewwindow=true,      
    colorlinks=true,       
    linkcolor=red,          
    citecolor=red,        
    filecolor=red,      
    urlcolor=blue           
}
\usepackage{multirow, url}
\usepackage{color}
\usepackage[all]{xy}
\linespread{1.2}

\DeclareMathAlphabet{\mathscr}{OT1}{pzc}{m}{it} 


\newtheorem{thm}{Theorem}[section]
\newtheorem{cor}[thm]{Corollary}
\newtheorem{lem}[thm]{Lemma}
\newtheorem{prop}[thm]{Proposition}

\newtheorem{conj}[thm]{Conjecture}

\theoremstyle{definition}
\newtheorem{defn}[thm]{Definition}
\theoremstyle{remark}
\newtheorem{rem}[thm]{Remark}

\newtheorem{exam}[thm]{Example}

\numberwithin{equation}{section} \numberwithin{table}{section}

\newcommand{\zell}{{\Z/{\ell\Z}}}
\newcommand{\muell}{{\mu_{\ell}}}

\newcommand{\GQ}{{\mathrm{Gal}(\overline{\mathbb{Q}}/{\mathbb{Q}})}}

\newcommand{\Gm}{{\mathbb{G}}_{\mathrm{m}}}
\newcommand{\inj}{\hookrightarrow}
\newcommand{\surj}{\twoheadrightarrow}
\newcommand{\arsub}{\ar@{}[r]|-*[@]{\subset}}
\newcommand{\arsup}{\ar@{}[r]|-*[@]{\supset}}
\newcommand{\arcap}{\ar@{}[d]|-*[@]{\subset}}
\newcommand{\arcup}{\ar@{}[u]|-*[@]{\subset}}

\renewcommand{\pmod}[1]{{~(\mathrm{mod}~{#1})}}

\newcommand{\F}{{\mathbb{F}}}

\newcommand{\Q}{{\mathbb{Q}}}
\newcommand{\Z}{{\mathbb{Z}}}

\newcommand{\C}{{\mathbb{C}}}
\newcommand{\T}{{\mathbb{T}}}

\newcommand{\bP}{{\mathbb{P}}}

\newcommand{\fh}{{\mathfrak{h}}}

\newcommand{\m}{{\mathfrak{m}}}

\newcommand{\cI}{{\mathcal{I}}}

\newcommand{\cT}{{\mathcal{T}}}

\newcommand{\Qbar}{\overline{\mathbb{Q}}}

\newcommand{\Ext}{{\mathrm{Ext}}}
\newcommand{\Hom}{{\mathrm{Hom}}}
\newcommand{\Gal}{{\mathrm{Gal}}}
\newcommand{\Div}{{\mathrm{Div}}}

\newcommand{\End}{{\mathrm{End}}}

\newcommand{\GL}{{\mathrm{GL}}}

\newcommand{\Frob}{{\mathrm{Frob}}}

\newcommand{\Ver}{{\mathrm{Ver}}}
\newcommand{\old}{{\mathrm{old}}}
\newcommand{\new}{{\mathrm{new}}}
\newcommand{\Ta}{{\mathrm{Ta}}}

\newcommand{\upto}{{up to products of powers of 2 and 3}}
\newcommand{\modl}{{\pmod {\ell}}} 
\mathchardef\hyp="2D



\newcommand{\mat}[4]{
 \left(  \begin{smallmatrix} #1 & #2 \\ #3 & #4 \end{smallmatrix} \right)}

\newcommand{\vect}[2]{
 \left(  \begin{smallmatrix} #1 \\ #2 \end{smallmatrix} \right)}

\newcommand{\br}[1]{\langle #1 \rangle}

\newcommand{\exclude}[1]{}

\begin{document}                                                                          

\title{The index of an Eisenstein ideal and multiplicity one}
\author{Hwajong Yoo}
\address{Center for Geometry and Physics, Institute for Basic Science (IBS), Pohang, Republic of Korea 37673}
\email{hwajong@gmail.com}

\begin{abstract}
Mazur's fundamental work on the Eisenstein ideal of prime level has a variety of arithmetic applications. In this article, we generalize some of his work to square-free level. More specifically, we attempt to compute the index of an Eisenstein ideal and 
the dimension of the $\m$-torsion of the modular Jacobian variety, where $\m$ is an Eisenstein maximal ideal. In many cases, the dimension of the $\m$-torsion is 2, in other words, a multiplicity one theorem holds. 
\end{abstract}
\subjclass[2010]{11G18 (Primary); 11F80(Secondary)}
\keywords{Cuspidal groups, Shimura subgroups, Eisenstein ideals, Multiplicity one}
\thanks{}

\maketitle
\setcounter{tocdepth}{1}
\tableofcontents

\section{Introduction}
In the early 20th century, Ramanujan found the following congruences:
$$
\tau(p) \equiv 1+ p^{11} \pmod {691}
$$
for any prime $p$, where $\tau(p)$ is the $p^{\rm{th}}$ Fourier coefficient of the cusp form $\Delta(z)= q \prod_{n \geq 1} (1-q^n)^{24}$ of weight 12 and level 1. Many mathematicians further found congruences between $\tau(p)$ and some arithmetic functions (such as $\sigma_k(p)=1+p^k$) modulo small prime powers. Serre and Swinnerton-Dyer \cite{Se72, Sw73} recognized that these congruences between Eisenstein series and cusp forms can be understood by making use of Galois representations associated to $\Delta$, and they determined all possible congruences of $\tau(p)$.

In the case of weight 2, Mazur discussed the Eisenstein ideal, which detect the congruences between Eisenstein series and cusp forms. (An ideal of a Hecke ring is called \textit{Eisenstein} if it contains $T_r-r-1$ for all primes $r$ not dividing the level.) 
For a prime $N$, he proved Ogg's conjecture (Conjecture \ref{ogg} below)
via a careful study of subgroups of the Jacobian variety $J_0(N)$ annihilated by the Eisenstein ideal. 
More precisely, in his paper \cite{M77}, he proved that $\T(N)/I \simeq \Z/{n\Z}$, where $\T(N)$ is the Hecke ring of level $N$, $I$ is the Eisenstein ideal of $\T(N)$, and $n$ is the numerator of $\frac{N-1}{12}$. Moreover, for each prime $\ell \mid n$, he further proved that $\dim J_0(N)[\m]=2$, where $\m$ is an Eisenstein maximal ideal generated by $I$ and $\ell$, and
$$
J_0(N)[\m]:= \{ x \in J_0(N)(\overline{\Q})~:~ Tx=0~\text{ for all } T \in \m \}.
$$
Using this result he proved a classification theorem for the rational torsion subgroups of elliptic curves over $\Q$.

After his work, the dimension of $J_0(N)[\m]$ has been studied by several mathematicians for the case that $\m$ is non-Eisenstein. 
Assume that $\m$ is a non-Eisenstein maximal ideal of the Hecke ring $\T$ of level $N$. Then, by Boston-Lenstra-Ribet \cite{BLR91}, $J_0(N)[\m] \simeq V^{\oplus a}$, where $V$ is the underlying irreducible module for the Galois representation $\rho_{\m} : \GQ \rightarrow \GL(2, \T/{\m})$ associated to $\m$. In most cases, for instance, when the characteristic of $\T/{\m}$ is prime to $2N$, the multiplicity $a$ is one. 
A slight generalization of this multiplicity one result played an important role in the work of Wiles on Fermat's Last Theorem. On the other hand, for an Eisenstein maximal ideal $\m$, the dimension of $J_0(N)[\m]$ has not been discussed when the level $N$ is not prime. 

In this paper, we generalize some part of results of Mazur \cite{M77} to square-free level. 
In \textsection 2, we introduce some Eisenstein modules that are used later. 

In \textsection 3, we attempt to compute the index of an Eisenstein ideal of square-free level. More precisely, let $N$ be a square-free integer. Then, any Eisenstein maximal ideal of $\T(N)$ contains 
$$
I_M :=( U_p-1, ~U_q-q,~ T_r-r-1 ~:~ \text{for primes} ~p \mid M,~ q \mid {N/M},~ \text{and}~ r \nmid N )
$$ 
for some divisor $M>1$ of $N$ by \cite[Lemma 2.1]{Yoo3}. 
For a square-free integer $K=\prod p_i$, let $\varphi(K)=:\prod (p_i-1)$ and $\psi(K):=\prod(p_i+1)$.
Then, we prove the following theorem.

\begin{thm}\label{thm : 1.1}
For any prime $y \nmid 2N$, we have 
$$
\T(N)/I_M \otimes \Z_{y} \simeq \Z/{m\Z} \otimes \Z_{y},
$$ 
where $m$ is the numerator of $\frac{\varphi(N)\psi(N/M)}{24}$. 
\end{thm}
(For the analogous theorem about the index of a ``new" Eisenstein ideal of $\T(N)$, see Theorem \ref{thm:neweisen}.)

In \cite{Yoo3}, the author extended Theorem \ref{thm : 1.1} to $y \mid 2N$ when $N/M$ is odd and greater than 1, and to $y \mid N$ when $N/M$ is even or $N=M$. When $y=2$, if $N/M$ is even or $N=M$, then we do not know what happens.

The following conjecture is presumably known to experts, but we could not find the origin.

\begin{conj}[Generalized Ogg's conjecture] \label{ogg}
The rational torsion subgroup of $J_0(N)$ is the cuspidal group.
\end{conj}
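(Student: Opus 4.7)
The plan is to attack the conjecture one prime at a time: writing $T := J_0(N)(\Q)_{\tor}$ and $\mathcal{C}(N)$ for the cuspidal group, which is rational over $\Q$ for square-free $N$ and hence automatically contained in $T$, it suffices to show for each prime $\ell$ that $T \otimes \Z_\ell \subset \mathcal{C}(N) \otimes \Z_\ell$. By a standard devissage on the $\ell$-adic Tate module, this in turn reduces to the statement $T[\ell] \subset \mathcal{C}(N)[\ell]$ combined with a matching of orders.

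First I would argue that every $P \in T[\ell]$ is annihilated by one of the Eisenstein ideals $I_M$ introduced in the excerpt. For a prime $r \nmid N\ell$, reduction mod $r$ is injective on $J_0(N)[\ell]$, and the Eichler--Shimura congruence $T_r = \Frob_r + r \Ver_r$ together with $\Frob_r P = P$ (because $P$ is $\Q$-rational) forces $T_r P = (r+1) P$. At a prime $p \mid N$, one analyses the action of $U_p$ on $P$ using the Galois action on the special fibre of the N\'eron model and the fact that the $\Gal(\overline{\Q}_p/\Q_p)$-character on $P$ is trivial; this pins down $U_p P \in \{P, pP\}$, and the partition of primes of $N$ into those with $U_p P = P$ versus $U_q P = qP$ determines a divisor $M \mid N$ with $I_M \cdot P = 0$. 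Hence $P$ lies in $J_0(N)[\m]$ for the Eisenstein maximal ideal $\m := (I_M,\ell)$.

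Next I would invoke the two main inputs of the paper. Theorem 1.1 controls which primes $\ell$ can occur: $\m$ is maximal only when $\ell$ divides the numerator of $\varphi(N)\psi(N/M)/3$, and in that case the multiplicity one theorem advertised in the abstract gives $\dim_{\F_\ell} J_0(N)[\m] = 2$. I would then compare $J_0(N)[\m]$ with $\mathcal{C}(N)[\m]$: the relevant cuspidal divisor classes (of degree zero differences of cusps corresponding to the divisor $M$) give a nonzero, Galois-stable, two-dimensional subspace of $\mathcal{C}(N)[\m]$; by multiplicity one both spaces have dimension exactly $2$, so $J_0(N)[\m] = \mathcal{C}(N)[\m]$. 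Summing over all Eisenstein $\m$ above $\ell$ yields $T[\ell] \subset \mathcal{C}(N)[\ell]$, and an order comparison between $\#\mathcal{C}(N) \otimes \Z_\ell$ (computable from cuspidal divisor degrees) and the total Eisenstein contribution $\prod_{M} \#\T_N/I_M \otimes \Z_\ell$ from Theorem 1.1 closes up the filtration.

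The hard part will be two-fold. The first difficulty is the multiplicity one statement itself at Eisenstein primes $\ell$ dividing $2N$: there the representation on $J_0(N)[\m]$ may become ramified at primes of bad reduction, the Shimura subgroup and the cuspidal subgroup can both contribute, and one must rule out an extra copy of $\mathbf{1}$ or $\mul$ appearing inside $J_0(N)[\m]$. This is exactly the technical heart of the paper. The second difficulty, which I expect to resolve only for $\ell \nmid 6N$ without further work, is the matching of the Eisenstein index (summed over $M$) with the precise $\ell$-part of the cuspidal group; the statement of Theorem 1.1 explicitly excludes $y \mid 2N$, so the conjecture at the primes $2$, $3$, and the primes dividing $N$ will require either separate direct arguments (as in Mazur's original treatment of $\ell = 2$ for prime level) or a refinement of the index formula.
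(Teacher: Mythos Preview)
This statement is a \emph{conjecture}, not a theorem: the paper does not prove it, and says only that the index computation (Theorem~\ref{index}) is expected to ``shed light on'' it. So there is no proof in the paper to compare against, and your proposal should be read as a strategy toward an open problem rather than a reconstruction of an existing argument.

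As a strategy it has two genuine gaps. First, the step ``by multiplicity one both spaces have dimension exactly $2$, so $J_0(N)[\m] = \mathcal{C}(N)[\m]$'' is wrong on both counts. The cuspidal group is a constant group scheme (all cusps of $X_0(N)$ are $\Q$-rational for square-free $N$), so $\mathcal{C}(N)[\m]$ sits inside the $\zl$-part of $J[\m]$; it can never account for the $\mul$ Jordan--H\"older factors. What you actually want is the much weaker statement $J[\m](\Q)\subseteq\mathcal{C}(N)$, and for that the relevant input is not multiplicity one but the fact (used in the proof of Theorem~\ref{multi}) that $J[\m]$ has a \emph{single} $\zl$ in its Jordan--H\"older filtration, coming from the $q$-expansion principle. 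Second, and more seriously, the paper itself shows that multiplicity one \emph{fails} in general: Theorem~\ref{thm-bound} gives $\dim J[\m]\geq 1+\varpi_0(\m)$, and Theorem~\ref{s2}(3) exhibits Eisenstein maximal ideals with $\dim J[\m]\in\{4,5\}$ already for $N=pq$ with $p\equiv q\equiv 1\modl$. So ``the multiplicity one theorem advertised in the abstract'' is conditional on $\varpi_\ell(S_N)=1$ (Theorem~\ref{thm:multi}), not a general fact you can invoke for all $\m$.

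Your Eichler--Shimura argument that $T_r$ acts by $r+1$ on $T[\ell]$ for $r\nmid N\ell$ is fine, but the claim that $U_p$ acts by $1$ or $p$ on an arbitrary rational $\ell$-torsion point needs a real argument (it is not formal from rationality alone), and the d\'evissage from $T[\ell]\subseteq\mathcal{C}(N)[\ell]$ to $T\otimes\Z_\ell\subseteq\mathcal{C}(N)\otimes\Z_\ell$ is not the standard Nakayama step you suggest: one must control the full $\m$-adic filtration, e.g.\ by showing each $T_\m$ is cyclic over $\T_\m$ and then matching its length against the index $\#(\T/I_M)\otimes\Z_\ell$.
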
  
(About the definition of the cuspidal group, see \textsection \ref{sec:cuspidalgroup}.)
Most of this conjecture was proved by Ohta \cite[Theorem]{Oh14}. For $N=pq$, using the result of \cite{Yoo3} on the index of an Eisenstein ideal, the author independently proved a result on this conjecture, overlapping Ohta's but not completely subsumed by it \cite[Theorem 1.2]{Yoo5}.

For an Eisenstein maximal ideal $\m$ of level $N$, we define
$$S_{\m} : = \text{ the set of primes at which } J[\m] \text{ is ramified};$$
$$S_N :=  \text{ the set of prime divisors of } N; $$
$$s_0(\m) := \#\{ p \mid N~:~p ~\equiv ~1 \pmod {\m} \};$$
$$s(\m) := \#\{ p \mid N~:~U_p \equiv ~1 \pmod {\m} \} \quad\text{and}$$
$$\varpi_0(\m) := \begin{cases}
s(\m)~\quad\text{ if }~~s(\m)=s_0(\m) \\ 
~~0 \quad\quad\quad ~\text{otherwise}. \\
\end{cases}$$
For a finite set $S$ of primes, we define
$$
\varpi_{\ell}(S):= \# \{ p \in S~:~p \equiv \pm 1 \modl  \}.
$$

In \textsection 4, we study the dimension of $J_0(N)[\m]$ for an Eisenstein maximal ideal $\m$ of residue characteristic $\ell \nmid 6N$. So, we assume that $\ell \nmid 6N$. Note that, by Theorem \ref{thm : 1.1}, $\varpi_{\ell}(S_N) \geq 1$.

\begin{thm}\label{thm:multi}
Assume that $\varpi_{\ell}(S_N)=1$. Then, we have  
$$
\dim J_0(N)[\m] = 2.
$$
In other words, a multiplicity one theorem holds for an Eisenstein maximal ideal $\m$ if $\varpi_{\ell}(S_N)=1$.
\end{thm}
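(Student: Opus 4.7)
The lower bound $\dim J_0(N)[\m] \geq 2$ follows from the results of Section 2: $J_0(N)[\m]$ contains both a $\mu_\ell$-piece (from the Shimura subgroup) and an $\F_\ell$-piece (from the cuspidal subgroup). So the content of the theorem is the upper bound $\dim J_0(N)[\m] \leq 2$. Set $V := J_0(N)[\m]$. Since $\ell \nmid 6N$, Theorem 1.1 implies $\T_N/\m \cong \F_\ell$, and because $\m$ is Eisenstein, every Jordan--H\"older factor of $V$ as an $\F_\ell[\GQ]$-module is either $\F_\ell$ or $\mu_\ell$; write $a$ and $b$ for the respective multiplicities. The goal becomes $a = b = 1$.

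The first step is to cut the ramification of $V$ down to $\{p_0, \ell\}$, where $p_0 \in S_N$ is the unique prime with $p_0 \equiv \pm 1 \pmod \ell$. Fix any other prime $q \mid N$; because $N$ is square-free, the reduction of $J_0(N)$ at $q$ is purely toric. Grothendieck's monodromy filtration on the $\ell$-adic Tate module descends, modulo $\m$, to a short exact sequence $0 \to V^t \to V \to V^e \to 0$ on which $U_q$ acts with eigenvalue $1$ on one graded piece and $q$ on the other (from the standard description of the character group of the toric fibre and the $U_q$-action on it). Since $q \not\equiv \pm 1 \pmod \ell$ we have $1 \ne q$ in $\F_\ell$, so exactly one of $U_q - 1$, $U_q - q$ lies in $\m$; the other acts invertibly on its graded piece and forces that piece to vanish. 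Hence the monodromy operator vanishes on $V$, i.e., the inertia subgroup $I_q$ acts trivially on $V$. Combined with the fact that $J_0(N)$ has good reduction at $\ell$ (as $\ell \nmid N$), this shows that $V$ is a finite flat $\F_\ell[\GQ]$-module, unramified outside $\{p_0, \ell\}$, with tame unipotent monodromy at $p_0$ of cyclotomic type.

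The remaining task is to translate these local conditions into the bound $a = b = 1$, following Mazur's strategy in \cite{M77}. The Weil pairing on $J_0(N)[\ell]$ provides a Hecke-equivariant perfect pairing relating $V$ to its Atkin--Lehner twist, which yields a symmetry between $a$ and $b$. A computation in the category $\mathcal{C}$ of finite $\F_\ell[\GQ]$-modules with the above local restrictions --- using global class field theory over $\Q(\mu_\ell)$ together with $\ell \nmid 6N$ and $\ell \mid p_0 \mp 1$ --- shows that $\Ext^1_{\mathcal{C}}(\F_\ell, \mu_\ell)$ and $\Ext^1_{\mathcal{C}}(\mu_\ell, \F_\ell)$ are each one-dimensional, spanned by the extensions already realized inside $V$ by the Shimura and cuspidal subgroups. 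Any additional Jordan--H\"older factor in $V$ would produce a second independent extension, contradicting this count; hence $a = b = 1$ and $\dim J_0(N)[\m] = 2$. The main obstacle is precisely this final step: pinning down the generators of these extension groups and verifying that the Shimura/cuspidal constructions of Section 2 genuinely exhaust them requires careful bookkeeping of the cyclotomic class-field data at $p_0$ and the flatness data at $\ell$, which is the most technical point in adapting Mazur's prime-level argument to the square-free setting.
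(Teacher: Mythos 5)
Your overall strategy (Jordan--H\"older factors are $\zl$ or $\mul$, then bound the number of factors by computing extension groups with local conditions) is in the same spirit as the paper's Case~1, but several of your concrete claims are wrong and the decisive step is not actually carried out. First, the lower bound: you assert that $J[\m]$ contains a $\mul$ coming from the Shimura subgroup, but $\Sigma_N[\m]=0$ unless $s(\m)=s_0(\m)$; for instance if the unique prime $p_0\equiv\pm1\pmod{\ell}$ satisfies $p_0\equiv-1\pmod{\ell}$, then by Vatsal's theorem ($\Sigma_N$ is the maximal multiplicative-type subgroup of $J$) $\mul$ is \emph{not} a subgroup of $J[\m]$ at all --- this is in fact a key point of the paper's argument, and the correct source of $\dim J[\m]\geq 2$ is that the $\m$-adic Tate module has rank two over $\T_\m$ (Mazur, Ch.~II, Lemma~7.7), not a Shimura-plus-cuspidal direct sum. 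Second, your reduction step at $q\mid N$, $q\not\equiv\pm1\pmod{\ell}$, starts from the false premise that $J_0(N)$ has purely toric reduction at $q$ (the special fibre is, up to the component group, an extension of $J_0(N/q)^2_{/\F_q}$ by a torus), and in any case unramifiedness at such $q$ is not needed: such primes contribute nothing to $\Ext_{S}(\mul,\zl)$, which is why the paper only tracks $\varpi_{\ell}(S_\m)$.

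The genuine gap is your final count. The claim that $\Ext^1_{\mathcal C}(\zl,\mul)$ (extensions of $\zl$ by $\mul$, finite flat at $\ell$, unramified outside $S$) is one-dimensional is false: the Kummer classes of the prime divisors of $N$ already give independent such extensions, so this group can have dimension as large as the number of prime factors of $N$; and the asserted Weil-pairing symmetry $a=b$ is also false in general (the paper's level-$pq$ results produce $\dim J[\m]=3$ with factors $\zl,\mul,\mul$), so neither half of your ``$a=b=1$'' conclusion is supported. The paper closes the argument with inputs your sketch omits entirely: (i) the $q$-expansion principle/ordinarity at $\ell$, which shows $J[\m]$ has at most one $\zl$ factor, and the constancy lemma, which makes the quotient literally $\mul^{\oplus r}$; (ii) Vatsal's maximality theorem, so that when $\Sigma_N[\m]=0$ one has $\mul\nsubseteq J[\m]$, and then Brumer--Kramer ($\dim\Ext_{S}(\mul,\zl)=\varpi_{\ell}(S)=1$) forces $r\leq1$ by a pullback/linear-dependence argument; and (iii) in the remaining case $s(\m)=s_0(\m)=1$, where $\mul\subseteq J[\m]$, a Serre--Tate plus component-group argument at the prime $p\equiv1\pmod{\ell}$ (the cuspidal $\zl$ maps isomorphically onto $\Phi_p[\m]$) showing $J[\m]$ is unramified at $p$, whence $\varpi_{\ell}(S_\m)=0$ and no nontrivial extension of $\mul$ by $\zl$ can occur. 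Without (i), (ii) and (iii) your argument cannot reach the stated equality.
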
 

We further prove a bound on $\dim J_0(N)[\m]$ involving the two numbers $\varpi_0(\m)$ and $\varpi_{\ell}(S_{\m})$.

\begin{thm}\label{thm-bound}
We have
$$
\max\{1+\varpi_0(\m), 2\} \leq \dim J_0(N)[\m] \leq 1+\varpi_0(\m)+\varpi_{\ell}(S_{\m}).
$$
\end{thm}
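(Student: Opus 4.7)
The plan is to control $J_0(N)[\m]$ as a $\GQ$-module by bounding the multiplicities of its Jordan--H\"older constituents. The Eichler--Shimura congruence combined with the Eisenstein identity $T_r \equiv r + 1 \pmod{\m}$ forces $(\Frob_r - 1)(\Frob_r - r)$ to annihilate $J_0(N)[\m]$ for every prime $r \nmid N\ell$, so (using $\ell \nmid 6N$) the only Jordan--H\"older factors are $\F_\ell$ (trivial) and $\F_\ell(1)$ (cyclotomic). Write $a$ and $b$ for their multiplicities, so $\dim J_0(N)[\m] = a+b$; a trace-and-Chebotarev argument applied to these two characters gives $a \geq 1$ and $b \geq 1$, establishing the bound $\dim J_0(N)[\m] \geq 2$.

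For the refined lower bound $\dim J_0(N)[\m] \geq 1 + \varpi_0(\m)$, I would produce $1+\varpi_0(\m)$ independent $\GQ$-invariants inside $J_0(N)[\m]$ coming from the $\m$-torsion of the cuspidal subgroup modules introduced in \textsection 2. When $s(\m) = s_0(\m) = s$, each of the $s$ primes $p \mid N$ satisfying $p \equiv 1 \pmod{\m}$ and $U_p \equiv 1 \pmod{\m}$ should yield an independent cuspidal divisor class landing in $J_0(N)[\m]^{\GQ}$; together with one universal cuspidal invariant this gives $a \geq 1 + \varpi_0(\m)$. The linear independence of these classes should follow by localizing the index formula $\T_N / I_M \otimes \Z_y \simeq \Z/m\Z \otimes \Z_y$ of Theorem~1.1 at $y = \ell$ and matching it against the size of the $\m$-torsion in the cuspidal subgroup.

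For the upper bound, the strategy is a prime-by-prime local analysis at $p \mid N$, exploiting that $J_0(N)$ has purely multiplicative (toric) reduction at $p$ with character group $X_p$. Inertia $I_p$ acts on $J_0(N)[\m]$ through a single unipotent element $\sigma$ with $(\sigma - 1)^2 = 0$, yielding a Galois-equivariant injection
$$\bigl(J_0(N)[\m]/J_0(N)[\m]^{I_p}\bigr)(1) \hookrightarrow J_0(N)[\m]^{I_p}.$$
The composition factors of the twisted coinvariants are $\F_\ell(1)$ and $\F_\ell(2)$; the $\F_\ell(1)$-part injects freely into $\F_\ell(1)$-factors of the invariants, but the $\F_\ell(2)$-part can inject only when $\F_\ell(2) \simeq \F_\ell$ or $\F_\ell(1)$ on the decomposition group at $p$, which forces $p \equiv \pm 1 \pmod{\ell}$. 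Hence only primes counted by $\varpi_\ell(S_\m)$ contribute ``extra'' non-semisimple dimensions, and the underlying $I_p$-invariant part is in turn bounded by $1 + \varpi_0(\m)$ via the Hecke-module structure on the quotients $X_p / \m X_p$ summed over $p \mid N$.

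The main obstacle is the coordinated local/global analysis and, in particular, ruling out any double counting between the $\varpi_0(\m)$-contribution (invariants of cuspidal type) and the $\varpi_\ell(S_\m)$-contribution (non-trivial monodromy at ramified primes), especially when both indices are supported on overlapping subsets of $S_N$. Handling this requires precise control of Grothendieck's monodromy pairing $X_p \otimes Y_p \to \Z$ modulo $\m$ and its Hecke-equivariance, together with the explicit description of $X_p / \m X_p$ furnished by the Eisenstein module structure from \textsection 2.
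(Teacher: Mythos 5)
Your strategy for the lower bound $1+\varpi_0(\m)$ is based on a misidentification of where the extra $\varpi_0(\m)$ dimensions live. You propose to exhibit $1+\varpi_0(\m)$ independent classes inside $J_0(N)[\m]^{\GQ}$ coming from cuspidal divisors, but this cannot work: by the $q$-expansion principle (ordinariness at $\ell$, since $T_\ell-1\in\m$), $J_0(N)[\m]$ admits at most \emph{one} Jordan--H\"older constituent isomorphic to $\zl$, and the $\m$-primary part of the cuspidal group is cyclic (free of rank one over $\T/{I_M}$ localized at $\ell$), so the space of rational points of $J_0(N)[\m]$ is at most one-dimensional. In the paper the contribution $\varpi_0(\m)$ is of \emph{multiplicative} type, not constant type: when $s(\m)=s_0(\m)$ the Shimura subgroup satisfies $\Sigma_N[\m]\simeq \mul^{\oplus \varpi_0(\m)}$ (Ling--Oesterl\'e, Proposition \ref{Shimura}), and together with the single $\zl$ furnished by $\langle C_{M,N}\rangle[\m]$ this gives $\dim J_0(N)[\m]\ge 1+\varpi_0(\m)$; when $s(\m)<s_0(\m)$ one has $\varpi_0(\m)=0$ and the bound $\ge 2$ comes from Mazur's result that the $\m$-adic Tate module has rank $2$ over $\T_\m$. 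Your ``trace-and-Chebotarev'' claim that both the trivial and cyclotomic constituents must occur is also unsubstantiated as stated: the relation $(\Frob_r-1)(\Frob_r-r)=0$ on $J_0(N)[\m]$ is satisfied, for example, by a module with only trivial constituents, so it does not by itself force $b\ge 1$.

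Your upper-bound strategy also has a factual problem and misses the key global input. For composite square-free $N$ the reduction of $J_0(N)$ at $p\mid N$ is \emph{not} purely toric: the identity component of $J_0(N)_{/\F_p}$ is an extension of $J_0(N/p)_{/\F_p}^2$ by a torus, so the monodromy-pairing/character-group analysis you describe does not apply in the form stated, and it is not clear how it would produce the bound $1+\varpi_0(\m)+\varpi_\ell(S_\m)$ without double counting. The paper's argument is global and short: after splitting off $\Sigma_N[\m]\simeq\mul^{\oplus \varpi_0(\m)}$, the remaining piece $K$ is an extension of $\mul^{\oplus r}$ by a single $\zl$ with $\mul\nsubseteq K$ (here Vatsal's theorem that $\Sigma_N$ is the maximal multiplicative subgroup of $J_0(N)$ is essential, and is absent from your sketch); pulling back along each coordinate $\mul\hookrightarrow\mul^{\oplus r}$ gives $r$ classes in $\Ext_{S_{\m}}(\mul,\zl)$, whose dimension is $\varpi_\ell(S_\m)$ by Brumer--Kramer, and a linear dependence among them would force $\mul\subseteq K$, a contradiction; hence $r\le \varpi_\ell(S_\m)$ and $\dim J_0(N)[\m]\le 1+\varpi_0(\m)+\varpi_\ell(S_\m)$. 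To repair your write-up you would need to replace the invariants count by the Shimura-subgroup input, justify the ``at most one $\zl$'' and ``multiplicative part is a sum of $\mul$'s'' structure statements, and either abandon the local toric analysis or restrict it correctly to the toric part of the special fiber.
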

Note that $\varpi_{\ell}(S_{\m})\leq \varpi_{\ell}(S_N)$ since $S_{\m} \subseteq S_N \cup \{ \ell \}$, so we can have an explicit bound on the dimension without further information about ramification of $J_0(N)[\m]$. Recently, Ribet and the author proved that the above upper bound is \textit{optimal} if $\varpi_0(\m)=0$. In other words, if $\varpi_0(\m)=0$, then
$$
\dim J_0(N)[\m]=1 + \varpi_{\ell}(S_{\m}).
$$
The author expects that the above upper bound is optimal unless $\varpi_0(\m)=1$.

Let us briefly remark upon the proof of the above theorem. First, we generalize some of Mazur's work to analyze $J_0(N)[\m]$. Next, we use Vatsal's work \cite{Va05} on the Shimura subgroup to understand how many copies of $\muell$'s can be contained in $J_0(N)[\m]$. Last, by the recent work of Brumer and Kramer \cite{BK14} on the extension group of $\muell$ by $\zell$ (over $\Z[1/N]$), we get an upper bound on $\dim J_0(N)[\m]$.
 
Theorem \ref{thm:multi} can be applied to the study of the structure of $J_0(N)[\m]$. If a multiplicity one theorem holds,
it gives one of the models of the Galois representation $\rho_{\m}$ associated to $\m$. Moreover, $J_0(N)[\m]$ can only be ramified at $\ell$ and at most a single prime divisor $p$ of $N$ such that $p \equiv \pm 1 \modl$. (See the proof of Theorem \ref{multi}.)
In the case when $N$ is the product of two distinct primes $p$ and $q$, we prove a more precise result on
$\dim J_0(N)[\m]$. This result also gives the description of the Galois module $J_0(N)[\m]$ by computing its dimension as a vector space over $\T/{\m}\simeq \F_{\ell}$. 

\subsubsection*{Acknowledgements}
The author would like to thank his advisor Kenneth Ribet. This paper would not exist if it were not for his inspired suggestions and his constant enthusiasm for the work. The author would like to thank Chan-Ho Kim, Sara Arias-de-Reyna, Sug Woo Shin, and Gabor Wiese for many suggestions toward the correction and improvement of this paper. The author would like to thank the anonymous referee for careful reading and a number of suggestions to improve the paper.
The author would also like to thank Samsung scholarship for supporting him during the course of graduate research.

\subsection{Notation}\label{sec:not}
Let $X_0(N)$ be the modular curve for $\Gamma_0(N)$ and let $J_0(N)$ be the Jacobian variety of $X_0(N)$. By Igusa \cite{Ig59}, Deligne-Rapoport \cite{DR73}, Katz-Mazur \cite{KM85}, and Raynaud \cite{Ra70}, there exists the N\'eron model of $J_0(N)$ over $\Z$, which we denote it by $J_0(N)_{/{\Z}}$. We denote by $J_0(N)_{/{\F_p}}$ the special fibers of $J_0(N)_{/{\Z}}$ over $\F_p$. (Since we will assume that $N$ is square-free later, all the special fiber of $J_0(N)_{/{\Z}}$ at bad primes come from the Deligne-Rapoport model.) We denote by $M_2(\Gamma_0(N))$ (resp. $S_2(\Gamma_0(N))$) the space of modular (resp. cusp) forms of weight 2 and level $\Gamma_0(N)$ over $\C$.

From now on, we assume that $\ell$ is a prime larger than 3. And we assume that the level $N$ is square-free and prime to $\ell$. 

For a square-free number $N$, we define
$$
\varphi(N):=\prod\limits_{p\mid N~\text{primes}} (p-1)\quad\mathrm{and} \quad\psi(N):= \prod\limits_{p \mid N ~\text{primes}} (p+1).
$$

For any group or module $X$, we denote by $\End(X)$ its endomorphism ring. We denote by $\zell$ the constant group scheme of order $\ell$ (over $\Z$). We denote by $\muell$ the multiplicative group scheme of order $\ell$ (over $\Z$). 
For a finite set $S$ of primes, we denote by $\Ext_{S}(\muell, \zell)$ the group of extensions of $\muell$ by $\zell$ over the ring $\Z_S:=\Z[1/n]$, where $n=\prod_{p \in S} p$. 

\section{Eisenstein series and Eisenstein modules}\label{sec:Eisen}
\subsection{Hecke operators}\label{sec:heckeoperators} 
Throughout this section, we assume that $p$ is a prime not dividing $N$ and $q$ is a prime divisor of $N$. 
\subsubsection{Degeneracy maps on modular curves} 
For a field $K$ of characteristic not dividing $Np$, the points of $X_0(Np)$ over $K$ are  isomorphism classes of  the triples $(E, C, D)$, where $E$ is a (generalized) elliptic curve over $K$, $C$ is a cyclic subgroup of $E$ of order $N$, and $D$ is a cyclic subgroup of $E$ of order $p$. Similarly, the points of $X_0(N)$ over $K$ are isomorphism classes of the pairs $(E, C)$. We can consider natural maps between modular curves
$$
\xymatrix{
X_0(Np) \ar@<.5ex>[r]^-{\alpha_p} \ar@<-.5ex>[r]_-{\beta_p} & X_0(N),
}
$$
where $\alpha_p (E, C, D):= (E, C)$ and $\beta_p(E, C, D) := (E/D, (C + D)/D)$. In other words, the map $\alpha_p$ is ``forgetting level $p$ structure" and the map $\beta_p$ is ``dividing by level $p$ structure".
As a map $X_0(Np)(\C)\simeq \fh^*/{\Gamma_0(Np)} \rightarrow \fh^*/{\Gamma_0(N)}\simeq X_0(N)(\C)$ on the complex points, $\alpha_p$ (resp. $\beta_p$) sends $z$ to $z$ (resp. $pz$), where $\fh$ is the complex upper half plane and $\fh^*=\fh \bigcup \bP^1(\Q)$.

\subsubsection{Degeneracy maps on modular forms} 
The maps $\alpha_p$ and $\beta_p$ above induce maps
$\alpha_p^*$ and $\beta_p^*$ between cusp forms of weight two, respectively as follows:
$$
\xymatrix{
S_2(\Gamma_0(N)) \ar@<.5ex>[r]^-{\alpha_p^*} \ar@<-.5ex>[r]_-{\beta_p^*} & S_2(\Gamma_0(Np)),
}
$$
where $\alpha_p^* (f(\tau))=f(\tau)$ and $\beta_p^* (f(\tau))= pf(p\tau)$. On its Fourier expansions at $i\infty$, $\alpha_p^*(\sum a_n x^n)=\sum a_n x^n$ and $\beta_p^*(\sum a_n x^n) = p\sum a_n x^{pn}$, where $x=e^{2\pi i \tau}$. Note that we use the $x$-expansion instead of the $q$-expansion because we denote by $q$ a prime divisor of the level $N$. From now on, by \textit{the $x$-expansion of $f$} we mean the classical $q$-expansion of $f$. These maps can also be extended to $M_2(\Gamma_0(N))$ via the same formula on their $x$-expansions.

\subsubsection{Hecke operators on modular curves and modular forms} 
The above degeneracy maps induce maps between divisor groups of modular curves. More specifically, we have 
$$
\xymatrix{
\Div(X_0(N)) \ar@<.5ex>[r]^-{\alpha_p^*} \ar@<-.5ex>[r]_-{\beta_p^*} & \Div(X_0(Np)) \ar@<.5ex>[r]^-{\alpha_{p, *}} \ar@<-.5ex>[r]_-{\beta_{p, *}} & \Div(X_0(N)),
}
$$
where 
$$
\alpha_p^* (E, C) = \sum_{D \subset E[p]} (E, C, D),~~~
\beta_p^* (E, C) = \sum_{D \subset E[p]} (E/D, (C + D) /D, E[p]/D);
$$
$$
\alpha_{p, *}(E, C, D) = (E, C), ~~\text{ and }~~ \beta_{p, *}(E, C, D) = (E/D, (C + D)/D).
$$
In the summation of the above formula, $D$ runs through all cyclic subgroups of order $p$. We define $T_p$ acting on $\Div(X_0(N))$ to be $\alpha_{p, *}\circ \beta_p^*$ (Albanese) or 
$\beta_{p, *}\circ \alpha_p^*$ (Picard). In terms of divisors we have
$$
T_p(E, C) = \sum_{D \subset E[p]} (E/D, (C+D)/D),
$$
where $D$ runs through all cyclic subgroups of order $p$.
This map induces an endomorphism of the Jacobian $J_0(N)$, which is also denoted by $T_p$. 

Since $S_2(\Gamma_0(N))$ can be identified with the cotangent space at 0 of $J_0(N)$, 
$T_p$ acts on $S_2(\Gamma_0(N))$. The above definition is compatible with the action of Hecke operators on $S_2(\Gamma_0(N))$, which is (on their $x$-expansions)
$$
T_p(\sum a_n x^n) := \sum a_{np}x^n + p \sum a_n x^{np}.
$$
This action also extends to $M_2(\Gamma_0(N))$ by the same formula on their $x$-expansions.

\subsubsection{Atkin-Lehner operators and more on Hecke operators} 
Let $q$ be a prime divisor of $N$. Since $N$ is square-free, $q^2$ does not divide $N$. Thus, $A := N/q$ is prime to $q$. We have an endomorphism $w_q$ of $X_0(N)$ such that
$$
w_q (E, C, D) = (E/D, (C+D)/D, E[q]/D),
$$
where $E$ is a (generalized) elliptic curve, $C$ is a cyclic subgroup of $E$ of order $A$, and $D$ is a cyclic subgroup of order $q$. It induces an Atkin-Lehner involution on $J_0(N)$, which  is also denoted by $w_q$. There is also the Hecke operator $T_q^t$ in $\End(\Div(X_0(N)))$, which acts by
$$
T_q^t(E, C, D) = \sum_{L \subset E[q]} (E/L, (C+L)/L, E[q]/L),
$$
where $L$ runs through all cyclic subgroups of $E$ of order $q$, which are different from $D$. This operator also induces an endomorphism of $J_0(N)$ (via Albanese functoriality), which is also denoted by $T_q^t$. 

\begin{lem}\label{lem:hecke atkin}
As endomorphisms of $J_0(N)$, we have $T_q^t+w_q = \beta_q^* \circ \alpha_{q, *}$, where 
$$
\xymatrix{
J_0(N)  \ar@<.5ex>[r]^-{\alpha_{q, *}}\ar@<-.5ex>[r]_-{\beta_{q, *}}& J_0(A)  \ar@<.5ex>[r]^-{\beta_q^*} \ar@<-.5ex>[r]_-{\alpha_q^*} & J_0(N).
}
$$ 
\end{lem}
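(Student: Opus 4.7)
The plan is to verify the identity directly on divisors of $X_0(N)$ and then descend to $\End(J_0(N))$ by functoriality of the Jacobian. Take a non-cuspidal point $(E, C, D) \in X_0(N)$, where $C$ is a cyclic subgroup of $E$ of order $M = N/q$ and $D$ is a cyclic subgroup of order $q$. I would compute the divisor produced on each side of the claimed equality and match them term by term.

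For the right-hand side, the push-forward $\alpha_{q,*}$ simply forgets the level-$q$ structure, so $\alpha_{q,*}(E, C, D) = (E, C)$. Applying $\beta_q^*$ to this pair sums over all $q+1$ cyclic subgroups $D'$ of order $q$ in $E[q] \simeq (\Z/q\Z)^2$, giving
$$
\beta_q^* \circ \alpha_{q,*}(E, C, D) \;=\; \sum_{D' \subset E[q]} \bigl(E/D',~(C+D')/D',~E[q]/D'\bigr).
$$
For the left-hand side, $T_q(E, C, D)$ is by definition the sum of the $q$ analogous triples $(E/L, (C+L)/L, E[q]/L)$ as $L$ ranges over cyclic subgroups of $E[q]$ of order $q$ \emph{different from} $D$, while $w_q(E, C, D) = (E/D, (C+D)/D, E[q]/D)$ is precisely the triple for $L = D$ missing from that sum. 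Adding the two therefore reproduces the full sum over all $q+1$ cyclic order-$q$ subgroups, matching the expression for $\beta_q^* \circ \alpha_{q,*}(E, C, D)$.

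This establishes the identity in $\End(\Div(X_0(N)))$ on the non-cuspidal locus; since only finitely many cusps remain, the equality of correspondences on a dense open set extends uniquely to all of $X_0(N)$. Finally, $T_q$ and $w_q$ act on $J_0(N)$ via Albanese functoriality from their divisor-level definitions, and likewise $\alpha_{q,*}$ (push-forward) and $\beta_q^*$ (pull-back) induce the corresponding maps between Jacobians; the divisor-level identity is compatible with linear equivalence, so it descends to an identity in $\End(J_0(N))$. I anticipate no substantial obstacle: the argument reduces to the bookkeeping observation that the $q+1$ cyclic subgroups of $E[q]$ split as $\{D\} \sqcup \{L \neq D\}$, which is exactly the $w_q$–$T_q$ dichotomy.
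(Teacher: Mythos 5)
Your proposal is correct and follows essentially the same route as the paper: compute $\beta_q^*\circ\alpha_{q,*}$ on a divisor $(E,C,D)$, observe that the resulting sum over all $q+1$ cyclic order-$q$ subgroups splits as the $T_q$-sum over $L\neq D$ plus the single $w_q$-term $L=D$, and then pass to $J_0(N)$. The paper's proof is just a terser version of this same divisor-level bookkeeping.
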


\begin{proof}
On $\Div(X_0(N))$, $\alpha_{q, *}(E, C, D)=(E, C)$ and hence
$$
\beta_q^* \circ \alpha_{q, *} (E, C, D) = \sum _{L \subset E[q]} (E/L, (C+L)/L, E[q]/L),
$$
where $L$ runs through all cyclic subgroups of $E$ of order $q$. It is equal to $(T_q^t+w_q)(E, C, D)$, therefore they induce the same map on $J_0(N)$. 
\end{proof}

\begin{rem} \label{rem:picardfunc}
Note that $\alpha_q^* \circ \beta_{q, *}=w_q (\beta_q^* \circ \alpha_{q, *})w_q$ and $T_q^t=w_qT_q w_q$, where $T_q$ is the transpose of $T_q^t$. If we use Picard functoriality of Jacobian varieties, the formula will be written as $T_q+w_q=\alpha_q^* \circ \beta_{q, *}$ 
(cf. \cite[p. 444-446]{R90}, \cite[\textsection 13]{MR91}). From now on, we will use the Picard functoriality.
\end{rem}

\subsubsection{Hecke algebras} 
For any positive integer $n$, we define $T_n$ as follows:
\begin{enumerate}
\item
$T_1=1$;

\item 
$T_{mn}=T_m T_n$ if $(m, n)=1$;

\item
$T_{p^k}=T_p T_{p^{k-1}}-pT_{p^{k-2}}$ for $k\geq 2$.
\end{enumerate}

We define $\T(N)$ as the $\Z$-subalgebra of $\End(J_0(N))$ generated by all $T_n$.
Note that $\T(N)$ is finite over $\Z$. Therefore all maximal ideals of $\T(N)$ are of finite index. 
We often denote by $U_q$ the Hecke operator $T_q$ for prime divisors $q$ of the level $N$ to emphasize the difference.

\subsection{Eisenstein series of weight 2 for $\Gamma_0(N)$}\label{sec:Eisensteinseries}
The space $M_2(\Gamma_0(N))$ of modular forms naturally decomposes into its subspace of cusp forms $S_2(\Gamma_0(N))$ 
and the \textit{Eisenstein space} $E_2(\Gamma_0(N))$, which is isomorphic to the quotient space $M_2(\Gamma_0(N))/{S_2(\Gamma_0(N))}$. We can pick a natural basis of $E_2(\Gamma_0(N))$ that consists of eigenforms for all Hecke operators. Since the number of cusps of $X_0(N)$ is $2^t$, where $t$ is the number of distinct prime divisors of $N$, the dimension 
of $E_2(\Gamma_0(N))$ is $2^t-1$. For more detail, see \cite[Chap.4]{DS05}.

\begin{defn}
We define $e$ to be the normalized Eisenstein series of weight $2$ and level $1$ whose $x$-expansion is
$$
-\frac{1}{24} + \sum_{n=1}^{\infty} \sigma(n)x^n,
$$
where $\sigma(n)=\sum\limits_{d\mid n, d>0} d$.
\end{defn}

\begin{rem}
Note that though $e$ is an eigenfunction for all Hecke operators, 
$e$ is not a classical modular form. And $e \modl $ is not a mod $\ell$ modular form of weight two (for a prime $\ell>3$), which means that it cannot be expressed as a sum of mod $\ell$ modular forms of weight two of any level prime to $\ell$. (In other words, the filtration of $e$ is $\ell+1$, not 2.) 
About this fact, see \cite{M77, Se72, Sw73}.  
\end{rem}

With the above function $e$, we can make Eisenstein series of weight two and level $N$ by raising the level.

\begin{defn}
For any modular form $g$ of weight $2$ and level $N$, and a prime $p$ not dividing $N$, we define 
$$
[p]^+(g)(z):=(\alpha_p^*-\beta_p^*)(g)= g(z) - p g(pz) \quad\mathrm{and}
$$
$$
[p]^-(g)(z):=(\alpha_p^*-\beta_p^*/p)(g) = g(z) - g(pz),
$$
where $\alpha_p^*$ and $\beta_p^* : M_2(\Gamma_0(N)) \rightarrow M_2(\Gamma_0(Np))$ are the two degeneracy maps in the previous section. 
\end{defn}

We want to apply the above maps to $e$. Since $e$ is not a genuine modular form, $[p]^+(e)$ or $[p]^-(e)$ may not have a proper meaning. However, $e(z)-pe(pz)$ becomes a genuine modular form of weight $2$ and level $p$, and $-24[e(z)-pe(pz)]$ is a modular form $e'$ on \cite[p. 78]{M77}. 
Also, $e(z)-pe(pz)$ is an eigenform for all Hecke operators. Therefore, we define $[p]^+(e)(z)$ by the same formula as above. (However, $[p]^-(e)$ is not a genuine modular form.) For more detail, see \cite[Example 2.2.6]{DI93} or \cite[\textsection 2.3]{Oh14}. Note that $U_p$ acts as $1$ on $[p]^+(e)$.

\begin{prop}
Let $g$ be an Eisenstein series of weight $2$ and level $N$ that is an eigenform for all Hecke operators.
Then, for a prime $p$ not dividing $N$, $[p]^+(g)$ and $[p]^-(g)$ are Eisenstein series of weight $2$ and level $Np$ such that the eigenvalues of $U_p$ are $1$ and $p$, respectively. 
\end{prop}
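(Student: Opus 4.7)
The plan is to split the statement into (a) identifying the Hecke eigenvalues of $[p]^{\pm}(g)$ at level $Np$, and (b) checking that these forms land in the Eisenstein subspace $E_2(\Gamma_0(Np))$ rather than in $S_2(\Gamma_0(Np))$. The one nontrivial input for (a) is the classical identity $T_p g = (p+1) g$, which holds because $g$ is an Eisenstein eigenform at level $N$ and $p \nmid N$; for $\Gamma_0(N)$ the Eisenstein spectrum at primes unramified in the level agrees with that of $e(\tau)$, whose $T_p$-eigenvalue is $\sigma(p) = 1+p$.

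For (a), the definitions give $\alpha_p^*(\sum a_n x^n) = \sum a_n x^n$ and $\beta_p^*(\sum a_n x^n) = p \sum a_n x^{pn}$ on $x$-expansions, while $T_r$ for $r \nmid Np$ and $U_q$ for $q \mid N$ act by the same Fourier formula at level $N$ and at level $Np$. A direct inspection therefore shows that $\alpha_p^*$ and $\beta_p^*$ each commute with every such operator, so $[p]^{\pm}(g)$ inherits the corresponding eigenvalues of $g$. For $U_p$, write $g = \sum_{n \geq 0} a_n x^n$; the relation $T_p g = (p+1) g$ rewrites as $\sum_n a_{np} x^n = (p+1) g(\tau) - p g(p\tau)$. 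Since $U_p$ at level $Np$ acts by $\sum b_n x^n \mapsto \sum b_{np} x^n$, substituting directly yields
\begin{align*}
U_p [p]^+(g) &= \sum_n (a_{np} - p a_n)\, x^n = g(\tau) - p\, g(p\tau) = [p]^+(g), \\
U_p [p]^-(g) &= \sum_n (a_{np} - a_n)\, x^n = p\, g(\tau) - p\, g(p\tau) = p \cdot [p]^-(g).
\end{align*}

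For (b), since $[p]^{\pm}(g)$ are Hecke eigenforms with $T_r$-eigenvalue $r+1$ for every prime $r \nmid Np$, they cannot be cuspidal: the Ramanujan bound $|a_r(f)| \leq 2\sqrt{r}$ for any weight-two cuspidal eigenform $f$ is violated because $r+1 > 2\sqrt{r}$ for every prime $r \geq 2$. Under the Hecke-equivariant decomposition $M_2(\Gamma_0(Np)) = S_2(\Gamma_0(Np)) \oplus E_2(\Gamma_0(Np))$, the nonzero forms $[p]^{\pm}(g)$ must therefore lie entirely in the Eisenstein factor.

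The main obstacle, insofar as there is one, is careful bookkeeping with the two normalizations of $\beta_p^*$ that distinguish $[p]^+$ from $[p]^-$, and pinning down up front that $g$ has $T_p$-eigenvalue $p+1$; once these are in place, the proof reduces to a short manipulation of Fourier coefficients.
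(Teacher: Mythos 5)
Your proposal is correct, but it argues at the level of Fourier expansions rather than the way the paper does. The paper's proof quotes the matrix identity for $U_p$ on the $p$-old subvariety of $J_0(Np)$ (Ribet's ``Formulaire''), namely that $U_p$ acts by $\bmat T_p & p \\ -1 & 0 \emat$ with the two coordinates mapped in by $\alpha_p^*$ and $\beta_p^*$; since $T_p g=(p+1)g$, the vectors $(x,-x)$ and $(px,-x)$ are eigenvectors with eigenvalues $1$ and $p$, which are exactly $[p]^+(g)$ and (a multiple of) $[p]^-(g)$. Your computation is the same linear algebra made explicit on $x$-expansions: you derive $\sum a_{np}x^n=(p+1)g(\tau)-pg(p\tau)$ from $T_pg=(p+1)g$ and then apply the $U_p$ coefficient formula, which is a self-contained route that does not require citing the formulaire, though it implicitly relies on the standard fact that $U_p$ acts on old forms by $b_n\mapsto b_{np}$ (equivalent to the quoted matrix). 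You also carry out two steps the paper leaves implicit: the commutation of $\alpha_p^*,\beta_p^*$ with $T_r$ ($r\nmid Np$) and $U_q$ ($q\mid N$) to get eigenform-ness at the remaining primes, and the Ramanujan--Petersson argument showing the cuspidal component vanishes, so $[p]^{\pm}(g)$ really lie in the Eisenstein part (strictly, the bound applies to the associated newform, whose $T_r$-eigenvalues away from the level agree with those of any old eigenvector, and one should note $[p]^{\pm}(g)\neq 0$, which follows by looking at the lowest nonvanishing Fourier coefficient); also note that your claim that every Eisenstein eigenform of level $N$ has $T_p$-eigenvalue $p+1$ uses the standing assumption that $N$ is square-free with trivial nebentypus. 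Both approaches are fine; the paper's is shorter and works directly on the old subvariety (which is the form in which the quadratic relation $U_p^2-T_pU_p+p=0$ is reused later), while yours is more elementary and verifies the full eigenform and Eisenstein claims explicitly.
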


\begin{proof}
On the $p$-old subvariety of $J_0(Np)$, $U_p$ and $T_p$ satisfy the following equality, where $U_p$ (resp. $T_p$) denotes the $p^{\text{th}}$ Hecke operator in $\T(Np)$ (resp. $\T(N)$), 
$$
U_p \begin{pmatrix}
x \\ y 
\end{pmatrix}=
\begin{pmatrix}
T_p & p \\ -1 & 0
\end{pmatrix}
\begin{pmatrix}
x \\ y
\end{pmatrix}
$$
(cf. ``Formulaire" 4 in \cite{R89}).
The first (resp. second) row maps into $J_0(Np)$ by the map $\alpha_p^*$ (resp. $\beta_p^*$). Note that this formula also works on the whole space of modular forms, not just on the space of cusp forms.
Since on $g$, $T_p$ acts by $p+1$, $U_p$ acts by 1 on $\vect{x}{-x}$ and by $p$ on $\vect{px}{-x}$. Thus, the result follows.
\end{proof}

\begin{rem}
Since two degeneracy maps between $X_0(Np)$ and $X_0(N)$ commute with other Hecke operators $T_n$ if $(n, p)=1$, $[p]^+(g)$ and $[p]^-(g)$ are also eigenforms for all Hecke operators $T_n$ with $(n, p)=1$.
\end{rem}

\begin{rem}\label{uptp}
On the $p$-old subvariety of $J_0(Np)$, $U_p$ satisfies a quadratic equation
$$
X^2 - T_p X + p = 0
$$
by the Cayley-Hamilton theorem.
\end{rem}

\begin{defn}
For $1 \leq s \leq t$, let $N=\prod\limits_{i=1}^{t} p_i$ and $M=\prod\limits_{j=1}^{s} p_j$. We define
$$
E_{M, N}:=[p_t]^- \circ \cdots \circ [p_{s+1}]^- \circ [p_s]^+ \circ \cdots \circ [p_1]^+(e).
$$
\end{defn}

Since $M>1$, $[p_1]^+(e)$ is a genuine Eisenstein series of weight $2$ and level $p_1$, which is an eigenform for all the Hecke operators. Therefore $E_{M, N}$'s are Eisenstein series of weight $2$ and level $N$, which are eigenforms for all the Hecke operators by the above proposition.
For given $N=\prod_{i=1}^{t} p_i$ and each $1 \leq s \leq t$, there are 
$\left(
\begin{smallmatrix}
t \\ s
\end{smallmatrix} \right)
$
different choices for $M$. Thus, the number of all possible $E_{M, N}$ is $2^t -1$. Since they are all eigenforms for the Hecke operators and their eigensystems are different, they form a basis of $E_2(\Gamma_0(N))$.

Later we will use these modular forms to compute the index of an Eisenstein ideal. To do that, we need an information about constant terms of Fourier expansions of an Eisenstein series at various cusps, in particular, at 0 and at $i\infty$. Recall Proposition 3.34 in \cite{FJ95}. (About unfamiliar notations in the proposition, see \textit{loc. cit.})

\begin{prop}[Faltings-Jordan]\label{FJ} Suppose that $N=p N'$ with $(N', p)=1$. Let $g$ be a modular form of weight $k$ and level $N'$, so $w_{p} g$ has level $N$. 
\begin{enumerate}
\item The modular form $(\alpha(p)-w_{p})g$ has constant term $\alpha(p)(1-p^{k-1})a_0(g;c)$ at a $p$-multiplicative cusp $c$,
$\alpha(p)(1-1/{p})a_0(g;c)$ at a $p$-etale cusp $c$.
\item The modular form $(\beta(p){p}^{k-1}-w_{p})g$ has constant term $0$ at a \\
$p$-multiplicative cusp, $({p}^{k-1}\beta(p)-\alpha(p)/{p})a_0(g;c)$ at a $p$-etale cusp $c$.
\end{enumerate}
\end{prop}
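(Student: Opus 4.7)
The plan is to reduce the statement to a cusp-by-cusp computation of constant terms in Fourier expansions, using the two degeneracy maps $\alpha_p$ and $\beta_p$ from $X_0(N)$ to $X_0(N')$ introduced earlier. Since $N = pN'$ with $(N',p) = 1$, each cusp $c'$ of $X_0(N')$ has exactly two preimages in $X_0(N)$---one of $p$-multiplicative type and one of $p$-etale type---and the Atkin-Lehner involution $w_p$ at level $N$ interchanges these two types above each $c'$. Consequently, computing $a_0(w_p g; c)$ at a cusp $c$ reduces to computing $a_0(g; c'')$ at the paired cusp $c''$, up to a scalar factor coming from the Atkin-Lehner normalization.

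First, I would record the ramification indices of the two degeneracy maps at the cusps: $\alpha_p$ is unramified at $p$-multiplicative cusps and has ramification index $p$ at $p$-etale cusps, while $\beta_p$ has the opposite behavior. Combined with the Fourier-expansion formulas $\alpha_p^* g = g$ and $\beta_p^* g = p^{k-1} g(pz)$ in weight $k$, this yields explicit values for $a_0(\alpha_p^* g; c)$ and $a_0(\beta_p^* g; c)$ in terms of $a_0(g; c')$ at each of the two cusp types. Next, I would use the identity $\alpha_p \circ w_p = \beta_p$ on $X_0(N)$, together with the classical Atkin-Lehner normalization, to express $w_p g$ as a scalar multiple of $\beta_p^* g$ on the image of $\alpha_p^*$, producing explicit values of $a_0(w_p g; c)$ at both cusp types.

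With these cusp-term formulas in hand, I would substitute into the combinations $(\alpha(p) - w_p) g$ and $(\beta(p) p^{k-1} - w_p) g$ and simplify. The pair $\alpha(p), \beta(p)$ should be interpreted as the two roots of the Hecke polynomial $X^2 - a_p X + p^{k-1}$ attached to $g$ at $p$ (cf.\ Remark \ref{uptp}), and the specific combinations in parts (1) and (2) are designed to isolate the two $p$-stabilizations of $g$. In particular, the coefficients in part (2) are tailored so that the constant-term contribution at a $p$-multiplicative cusp cancels---this is the defining property of the $U_p$-eigenform with eigenvalue $\beta(p)$---while the surviving term at a $p$-etale cusp reproduces the formula $(p^{k-1}\beta(p) - \alpha(p)/p) a_0(g;c)$. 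An analogous computation for part (1) gives the two stated formulas after elementary manipulation.

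The main technical obstacle is nailing down the precise Atkin-Lehner normalization, since different conventions in the literature differ by factors of $p^{k/2}$ or $p^{k-1}$. The entire content of the scalar factors like $p^{k-1}$ and $1/p$ appearing in the proposition is the bookkeeping of this normalization choice. Once the convention is fixed to match the setup in Faltings--Jordan, the remainder of the proof is a routine computation using the ramification data of $\alpha_p$ and $\beta_p$ at the cusps.
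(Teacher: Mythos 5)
First, a point of reference: the paper does not prove this proposition at all --- it is recalled verbatim from Faltings--Jordan (Proposition 3.34 of \cite{FJ95}) and then specialized to $\alpha=\beta=1$, $k=2$, $w_pg(z)=pg(pz)=\beta_p^*(g)(z)$ for use in Proposition \ref{Eisensteinseries}. So your sketch is supplying an argument the paper deliberately outsources. Your general strategy is the right computational route: each cusp of $X_0(N')$ has one $p$-multiplicative and one $p$-etale cusp of $X_0(N)$ above it, $w_p$ swaps them, the ramification indices of $\alpha_p,\beta_p$ are as you say, and in the trivial-character weight-two situation the paper actually uses, the computation $a_0(w_pg;\,\text{mult})=p^{k-1}a_0(g)$, $a_0(w_pg;\,\text{etale})=a_0(g)/p$ does reproduce every constant in the statement.

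However, as a proof of the proposition as stated there is a genuine gap in how you treat $\alpha(p)$ and $\beta(p)$. They are not ``the two roots of the Hecke polynomial $X^2-a_pX+p^{k-1}$'': in Faltings--Jordan, $g$ is an Eisenstein eigenform attached to an \emph{ordered} pair of Dirichlet characters $(\alpha,\beta)$, its $T_p$-eigenvalue is $\alpha(p)+\beta(p)p^{k-1}$, and the roots of the relevant polynomial (whose constant term is $\alpha\beta(p)p^{k-1}$, not $p^{k-1}$) are $\alpha(p)$ and $\beta(p)p^{k-1}$. This is not merely a notational slip: an unordered pair of roots cannot yield the asymmetric conclusions, since only one of the two stabilizations $(\lambda-w_p)g$ has vanishing constant term at every $p$-multiplicative cusp, and nothing that sees only the Hecke polynomial can decide which one. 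Deciding it is exactly the computation you defer under ``nailing down the Atkin--Lehner normalization'': one must show $a_0(w_pg;c)=\beta(p)p^{k-1}a_0(g;c)$ at a $p$-multiplicative cusp and $a_0(w_pg;c)=\alpha(p)p^{-1}a_0(g;c)$ at a $p$-etale cusp, using the explicit expansions of the Eisenstein series at all cusps (equivalently, its behaviour under $\bmat p&0\\0&1 \emat$ together with the character data). Calling the vanishing in part (2) ``the defining property of the $U_p$-eigenform with eigenvalue $\beta(p)$'' asserts precisely the point to be proved, and the eigenvalue there is $\beta(p)p^{k-1}$ in any case. For the application in this paper ($\alpha=\beta=1$, $k=2$) these issues collapse and your outline is adequate, but it does not yet prove the cited statement in its stated generality.
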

Here $a_0(g; c)$ denotes the constant term of the Fourier expansion of $g$ at the cusp $c$. And $\alpha$ and $\beta$ denote the Dirichlet characters, which are the trivial characters in our situation. Also, $k=2$ and $w_{p}g(z)=p g(pz)=\beta_{p}^*(g)(z)$. Using the above result, we compute the constant term of $E_{M, N}$ of level $N$ at the multiplicative cusp 
$i\infty$ and the etale cusp $0$.
First recall that $E_{p, p}$ has constant term $-\frac{1-p}{24}$ at $i\infty$ and $-\frac{p-1}{24p}$ at $0$ (cf. \cite[p. 78]{M77}). 

\begin{prop} \label{Eisensteinseries}
The constant term of $E_{M, N}$ at $i\infty$ is either $0$ if $M\neq N$  or $(-1)^{t+1}\frac{\varphi(N)}{24}$ if $M=N$. Its constant term at $0$ is $-\frac{\varphi(N)\psi(N/M)}{24N(N/M)}$.
\end{prop}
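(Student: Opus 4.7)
The plan is to compute the constant terms of $E_{M,N}$ inductively by peeling off one prime at a time from the definition $E_{M,N} = [p_t]^- \circ \cdots \circ [p_{s+1}]^- \circ [p_s]^+ \circ \cdots \circ [p_1]^+(e)$, and to apply Proposition \ref{FJ} at each step. With trivial character and weight $k=2$, one has $[p]^+(g) = (1-w_p)g$ and $p \cdot [p]^-(g) = (p - w_p)g$, so Proposition \ref{FJ} specializes to the following multipliers on constant terms: $[p]^+$ multiplies by $1-p$ at a $p$-multiplicative cusp and by $(p-1)/p$ at a $p$-etale cusp, while $[p]^-$ multiplies by $0$ at a $p$-multiplicative cusp and by $(p^2-1)/p^2$ at a $p$-etale cusp. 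The geometric input that drives everything is the standard fact that in each intermediate level $X_0\bigl(\prod_{i \leq j} p_i\bigr)$ the cusp $i\infty$ is $p_j$-multiplicative while the cusp $0$ is $p_j$-etale, and that both cusps lift canonically from one level to the next under the degeneracy maps.

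For the constant term at $i\infty$, the case $M \neq N$ is immediate: some operator $[p_j]^-$ occurs in $E_{M,N}$, and once it is applied at the $p_j$-multiplicative cusp $i\infty$ the constant term becomes $0$ and remains so under all subsequent operators. When $M = N$, only operators $[p_i]^+$ appear, each contributing the factor $1 - p_i$ at $i\infty$; starting from the base value $a_0(e; i\infty) = -\tfrac{1}{24}$ and using $\prod_{i=1}^{t}(1 - p_i) = (-1)^t \varphi(N)$ yields the claimed $(-1)^{t+1}\varphi(N)/24$.

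For the constant term at $0$, every step contributes a nonzero factor because $0$ is etale at each of the primes $p_i$. Collecting these factors gives
$$
\prod_{i \leq s} \frac{p_i - 1}{p_i} \cdot \prod_{i > s} \frac{p_i^2 - 1}{p_i^2} \;=\; \frac{\varphi(M)}{M} \cdot \frac{\varphi(N/M)\psi(N/M)}{(N/M)^2},
$$
which when multiplied by $-\tfrac{1}{24}$ and simplified via $\varphi(N) = \varphi(M)\varphi(N/M)$ together with $M(N/M)^2 = N(N/M)$ produces the stated formula. The main obstacle is purely bookkeeping: one must verify carefully at each intermediate stage that the cusps $i\infty$ and $0$ of the current modular curve are indeed $p_j$-multiplicative and $p_j$-etale respectively for the newly adjoined prime $p_j$, and that the quantity $a_0(g;c)$ appearing in Proposition \ref{FJ} is interpreted as the constant term of the previous-stage form at the cusp lying under $c$. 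Once that is in place, the induction proceeds cleanly with the level-one computation $a_0(e) = -\tfrac{1}{24}$ as the base case, and specializes to the known values of $E_{p,p}$ quoted above as a consistency check when $t = 1$.
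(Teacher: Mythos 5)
Your proposal is correct and follows essentially the same route as the paper: both apply the Faltings--Jordan formulas (Proposition \ref{FJ}) with $\alpha=\beta=1$, $k=2$, $w_pg(z)=pg(pz)$ to get the multipliers $1-p$ and $\frac{p-1}{p}$ for $[p]^+$ and $0$ and $\frac{p^2-1}{p^2}$ for $[p]^-$ at the multiplicative cusp $i\infty$ and the etale cusp $0$, and then conclude by induction on the primes dividing $N$. The only cosmetic difference is that you start the induction at level $1$ with $a_0(e)=-\tfrac{1}{24}$ and use the quoted constant terms of $E_{p,p}$ only as a consistency check, whereas the paper quotes those level-$p$ values from Mazur as its starting point.
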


\begin{proof}
Since $[p]^+(g)(z)=g(z)-pg(pz)=(\alpha(p)-w_p)g$, its constant term at $i\infty$ (resp. at $0$) is $(1-p)a_0(g)$ (resp. $\frac{1}{p}(p-1)b_0(g)$),
where $a_0(g)$ (resp. $b_0(g)$) is the constant term of $g$ at $i\infty$ (resp. at $0$).
Moreover, because $[p]^-(g)(z)=g(z)-g(pz)=\frac{1}{p}(\beta(p)p-w_p)g$, its constant term at $i\infty$ (resp. at $0$) is $0$
 (resp. $\frac{1}{p^2}(p-1)(p+1)b_0(g)$). Thus, the result follows by induction. 
\end{proof}

\subsection{The cuspidal group of $J_0(N)$} \label{sec:cuspidalgroup}
Let $P_n$ be the cusp corresponding to $\vect 1n$ as in \cite{Og74}. 
(It corresponds to $\frac{1}{n}$ in $\bP^1(\Q)$, so $P_1=0$ and $P_N = i\infty$.)
Then the cusps of $X_0(N)$ are those of the form $P_n$ for all possible positive divisors $n$ of $N$. 
\textit{The cuspidal group of $J_0(N)$} is the group generated by the equivalence classes of the degree $0$ divisors $\sum_{d \mid N} a_d P_d$. We introduce some special elements of the cuspidal group of $J_0(N)$.

\begin{defn} 
As in the previous section, let $N=\prod\limits_{i=1}^{t} p_i$ and $M=\prod\limits_{j=1}^{s} p_j$ for some $1 \leq s \leq t$. We define
$$
C_{M, N} := \sum\limits_{n\mid M} (-1)^{\omega(n)} P_n = P_1 - (\sum\limits_{i=1}^s P_{p_i})+ \cdots + (-1)^s P_M \in J_0(N),
$$
where $\omega(n)$ is the number of distinct prime divisors of $n$. And we denote by $\langle C_{M, N} \rangle$ the cyclic subgroup of $J_0(N)$ generated by $C_{M, N}$.
\end{defn}

\begin{prop} \label{cusp}
On the group $\langle C_{M, N} \rangle$, $U_{p_i}$ acts by 1 for $1 \leq i \leq s$ and $U_{p_j}$ acts by $p_j$ for $s<j\leq t$. For any prime $r$ not dividing $N$, $T_r$ acts by $r+1$ on $\langle C_{M, N} \rangle$. The order of $\langle C_{M, N} \rangle$ is the numerator of $\frac{\varphi(N)\psi(N/M)}{24}$ up to powers of 2.
\end{prop}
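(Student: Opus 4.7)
The plan is to separate the three Hecke eigenvalue assertions from the order computation. All Hecke identities will be proved by unwinding the maps on the individual cusps $P_n^{(N)}$ for $n\mid N$. For a prime $r\nmid N$, applying $T_r=\alpha_{r,*}\circ\beta_r^*$ to $P_n^{(N)}$ and using the widths of cusps of $X_0(N)$ and $X_0(Nr)$ shows that the two cusps $P_n^{(Nr)}$ and $P_{nr}^{(Nr)}$ lie above $P_n^{(N)}$ with $\beta_r$-ramification indices $r$ and $1$ respectively, and both push forward to $P_n^{(N)}$ under $\alpha_{r,*}$. Hence $T_r(P_n^{(N)})=(r+1)P_n^{(N)}$ for every $n\mid N$, which gives $T_r(C_{M,N})=(r+1)C_{M,N}$.

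For a prime $p_i\mid N$, I would use the identity $T_{p_i}+w_{p_i}=\beta_{p_i}^*\circ\alpha_{p_i,*}$ from the Lemma, together with three explicit cusp formulas: the Atkin--Lehner involution satisfies $w_{p_i}(P_n^{(N)})=P_{n'}^{(N)}$ with $n'=np_i$ if $p_i\nmid n$ and $n'=n/p_i$ if $p_i\mid n$; the pushforward $\alpha_{p_i,*}(P_n^{(N)})$ is the cusp of $X_0(N/p_i)$ with the same reduced $\bP^1(\Q)$-label; and $\beta_{p_i}^*(P_m^{(N/p_i)})=p_i P_m^{(N)}+P_{mp_i}^{(N)}$ for $p_i\nmid m$, computed from the ratio of cusp widths. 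If $p_i\mid M$ (so $i\leq s$), the involution $n\mapsto n'$ stabilizes the set of divisors of $M$ and flips the sign $(-1)^{\omega(n)}$, giving $w_{p_i}(C_{M,N})=-C_{M,N}$; moreover $\alpha_{p_i,*}(C_{M,N})=0$ by a telescoping cancellation, so the identity reduces to $T_{p_i}(C_{M,N})=-w_{p_i}(C_{M,N})=C_{M,N}$. If $p_i\nmid M$ (so $i>s$), then $\alpha_{p_i,*}(C_{M,N})$ is the analogous divisor $\sum_{n\mid M}(-1)^{\omega(n)}P_n^{(N/p_i)}$ at level $N/p_i$, whose pullback is $p_iC_{M,N}+w_{p_i}(C_{M,N})$; subtracting yields $T_{p_i}(C_{M,N})=p_iC_{M,N}$.

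For the order assertion I would exhibit an explicit modular unit $f_{M,N}(z)=\prod_{d\mid N}\eta(dz)^{r_d}$ on $X_0(N)$, with integer exponents $r_d$ defined by an inclusion--exclusion recipe over the divisors of $M$. Verifying Ligozat's congruence criteria (the weight-vanishing condition and the two divisibility conditions modulo $24$) shows $f_{M,N}$ is a genuine modular function, and Ligozat's cusp-valuation formula computes its divisor: it is supported precisely on $\{P_n:n\mid M\}$ with signs $(-1)^{\omega(n)}$ and common factor $\varphi(N)\psi(N/M)/3$ up to powers of $2$. This bounds the order of $C_{M,N}$ from above. The matching lower bound comes from Proposition \ref{Eisensteinseries}: for a prime $\ell\nmid 6N$, the $\ell$-adic denominator of the constant term $-\varphi(N)\psi(N/M)/(24N(N/M))$ of $E_{M,N}$ at the cusp $0$, together with the fact that $E_{M,N}$ has integral $x$-expansion at $i\infty$, forces the $\ell$-part of the order of $\langle C_{M,N}\rangle$ to be at least the $\ell$-part of $\varphi(N)\psi(N/M)$. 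Matching the two bounds gives the asserted order up to powers of $2$.

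The main obstacle is the order computation: constructing the eta quotient $f_{M,N}$ with the correct exponents $r_d$, verifying Ligozat's congruence conditions, and extracting exactly the advertised multiplier $\varphi(N)\psi(N/M)/3$. By contrast, the Hecke-eigenvalue computations amount to direct bookkeeping with cusp widths and the alternating sign structure of $\sum_{n\mid M}(-1)^{\omega(n)}P_n$.
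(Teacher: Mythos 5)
Your Hecke-eigenvalue computation for $U_{p_j}$ with $j>s$ rests on the pullback formula $\beta_{p}^*(P_m^{(N/p)})=pP_m^{(N)}+P_{mp}^{(N)}$, but that is the formula for $\alpha_p^*$; computing ramification from cusp widths (or from $\beta_p=\alpha_p\circ w_p$) gives $\alpha_p^*(P_m)=pP_m+P_{mp}$ and $\beta_p^*(P_m)=P_m+pP_{mp}$. With the correct formulas, the Albanese identity $T_q+w_q=\beta_q^*\circ\alpha_{q,*}$ that you quote from the Lemma yields $T_q(C_{M,N})=C_{M,N}+(q-1)w_q(C_{M,N})$, which is not $qC_{M,N}$ in general (for $N=pq$, $M=p$ the discrepancy is $(q-1)C_{pq,pq}$, typically nonzero). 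The proposition concerns the operators of $\T_N$ taken via Picard functoriality (Remark \ref{rem:picardfunc}), and the correct derivation is $U_q+w_q=\alpha_q^*\circ\beta_{q,*}$, $\beta_{q,*}(C_{M,N})=C_{M,N/q}$, $\alpha_q^*(C_{M,N/q})=qC_{M,N}+w_q(C_{M,N})$. Your two slips (swapped formula, wrong functoriality) cancel numerically, but as written the step is incorrect; the $i\le s$ case and the $T_r$ case are unaffected, since there the swap is harmless.

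The order assertion is where the genuine gap lies. Your upper bound is only a plan: producing an eta quotient whose divisor is exactly $\num\!\left(\frac{\varphi(N)\psi(N/M)}{3}\right)C_{M,N}$ up to powers of $2$ after imposing Ligozat's mod $24$ conditions is precisely the hard content (this is what Chua--Ling carry out for $t=2$, already with exceptional behaviour at $2$), and you do not execute it for general square-free $N$ and arbitrary $M\mid N$; an eta quotient with a larger multiplier would give a weaker bound than claimed. More seriously, your lower bound has no working mechanism: for $\ell\nmid 6N$ the constant term $-\frac{\varphi(N)\psi(N/M)}{24N(N/M)}$ of $E_{M,N}$ at $0$ is an $\ell$-adic integer (its denominator $24N(N/M)$ is prime to $\ell$), so there is no ``$\ell$-adic denominator'' to exploit, and you never explain how constant terms of $E_{M,N}$ would bound the order of a divisor class from below; in this paper the constant-term argument runs in the opposite direction (it bounds the index of $I_M$ in Theorem \ref{index}, with Proposition \ref{cusp} as an input, so importing it here also risks circularity). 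The paper's own proof avoids modular units entirely: it uses $[p]^+(C_{M/p,N/p})=(p-1)C_{M,N}$ and $[p]^-(C_{M,N/p})=(p^2-1)C_{M,N}$, Ribet's theorem that the kernel of $\gamma(x,y)=\alpha_p^*(x)+\beta_p^*(y)$ is the antidiagonal Shimura subgroup (hence $[p]^{\pm}$ is injective on cuspidal groups away from $2$, constant versus multiplicative), and induction on $t$ from the known cases $t\le 2$ due to Mazur, Ogg, and Chua--Ling. To salvage your route you would need either the full lattice of eta-quotient divisors (a Takagi-style computation) or an honest lower-bound argument; as proposed, the order statement is not proved.
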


\begin{proof}
First, recall that for a prime divisor $p$ of $N$, $U_p+w_p$ acts on $J_0(N)$ by $\alpha_p^*\circ \beta_{p, *}$, where $\alpha_p$ and  $\beta_p$ are the two degeneracy maps 
$$
\xymatrix{
X_0(N) \ar@<.5ex>[r]^-{\alpha_p} \ar@<-.5ex>[r]_-{\beta_p} & X_0(N/p)
}
$$
and $w_p$ is the Atkin-Lehner involution. (See Remark \ref{rem:picardfunc}.) For some $D \mid N$ with $p \nmid D$, $\alpha_{p, *}(P_D)=\alpha_{p, *}(P_{pD})=\beta_{p, *}(P_D)=\beta_{p, *}(P_{pD})=P_D$.
And $\alpha_p^*(P_D)= pP_D + P_{pD}$, $\beta_p^*(P_D)=P_D+pP_{pD}$. Moreover $w_p(P_D)=P_{pD}$, $w_p(P_{pD})=P_D$. (cf. \cite[\textsection 5]{R89}.)

Let $p=p_i$ for some $1 \leq i \leq s$. Then $\beta_{p, *}(C_{M, N})=0$, hence $(U_p+w_p)(C_{M, N})=\alpha_p^*\circ\beta_{p, *}(C_{M, N})=0$. Since $w_p(C_{M, N})=-C_{M, N}$, 
$U_p(C_{M, N})=C_{M, N}$. 

Let $q=p_j$ for some $s < j \leq t$. Then
$$
w_q(C_{M, N})= P_q - (\sum\limits_{i=1}^s P_{q p_i})+(\sum\limits_{i<j}^s P_{q p_i p_j})+ \cdots + (-1)^s P_{qM}(=:C^{(q)}_{M, N})
$$
and $\beta_{q, *}(C_{M, N}) = C_{M, N/q}$. Thus
$$
(U_q+w_q)(C_{M, N})=\alpha_q^* (C_{M, N/q}) = qC_{M, N} + C^{(q)}_{M, N} = qC_{M, N} + w_q(C_{M, N}),
$$
so $U_q(C_{M, N})=q C_{M, N}$.

Next, let $r \nmid N$ be a prime. Then $T_r = \alpha_{r,*} \circ \beta_r^*$ on $J_0(N)$, where $\alpha_r$ and $\beta_r$ are the two degeneracy maps
$$
\xymatrix{
X_0(Nr) \ar@<.5ex>[r]^-{\alpha_r} \ar@<-.5ex>[r]_-{\beta_r} & X_0(N).
}
$$
For any $D \mid N$, $\beta_r^*(P_D)=P_D + rP_{rD}$, so $\alpha_{r,*} \circ \beta_r^*(P_D)=(r+1)P_D$. Thus, $T_r(C_{M, N})=(r+1)C_{M, N}$.

For the order of $\br {C_{M, N}}$, see \cite[Theorem 3.1]{Yoo3}. 
\end{proof}

\subsection{The Shimura subgroup of $J_0(N)$} 
\textit{The Shimura subgroup of $J_0(N)$} is the kernel of the map $J_0(N) \rightarrow J_1(N)$, that is induced by
the natural covering $X_1(N) \rightarrow X_0(N)$. 
Since the covering group of $X_1(N) \rightarrow X_0(N)$ is $(\Z/{N\Z})^{\times}/{\{\pm 1\}}$, 
the covering group of the maximal \'etale subcovering of $X_1(N) \rightarrow X_0(N)$ is a quotient of $(\Z/{N\Z})^{\times}/{\{\pm 1\}}$. The Shimura subgroup is the Cartier dual of this covering group. When $N$ is prime, Mazur discussed it on \textsection 11 of Chapter II in \cite{M77}.
(In general, see the paper by Ling and Oesterl\'e \cite{LO91}.)

As before let $N=\prod\limits_{i=1}^t p_i$, and let $\Sigma_N$ be the Shimura subgroup of $J_0(N)$. 
By the Chinese remainder theorem, $(\Z/{N\Z})^* \simeq (\Z/{p_1\Z})^* \times \cdots \times (\Z/{p_t\Z})^*$. Similarly, we can decompose the Shimura subgroup as
$$
\Sigma_N \simeq \Sigma_{p_1} \times \cdots \times \Sigma_{p_t}.
$$
Each $\Sigma_{p_i}$ corresponds to the subcovering $X_1(p_i, N/{p_i}) \rightarrow X_0(N)$ of $X_1(N) \rightarrow X_0(N)$,
where $X_1(A, B)$ is the modular curve for the group $\Gamma_1(A) \cap \Gamma_0(B)$. 
Note that $\Sigma_{p_i}$ is cyclic of order $p_i-1$ \upto. 

\begin{prop}[Ling-Oesterl\'e] \label{Shimura}
On $\Sigma_{p_i}$, $U_{p_i}$ acts by 1, $U_{p_j}$ acts by $p_j$ for $j \neq i$, and $T_r$ acts by $r+1$ for primes $r \nmid N$.
\end{prop}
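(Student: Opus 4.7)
The plan is to reduce Proposition~\ref{Shimura} to Mazur's analogous result at prime level via (i) compatibility of Hecke operators with Picard pullback along degeneracy maps, and (ii) Ribet's identification of $\ker(\gamma)$ already invoked in the proof of Proposition~\ref{cusp}. Since the argument varies the level, I temporarily write $\Sigma_{p_i, K}\subset J_0(K)$ for the $p_i$-component of the Shimura subgroup at level $K$. The first task is to realize $\Sigma_{p_i, N}$ as a degeneracy pullback of $\Sigma_{p_i, p_i}\subset J_0(p_i)$. For any $j\neq i$ and $M = N/p_j$, the square
\[
\xymatrix{
X_1(p_i, N/p_i) \ar[r] \ar[d] & X_1(p_i, M/p_i) \ar[d] \\
X_0(N) \ar[r]^-{\alpha_{p_j}} & X_0(M)
}
\]
is cartesian, as one checks directly on moduli: both fiber products parametrize triples $(E, D_N, P)$ with $D_N$ cyclic of order $N$ and $P\in D_N$ of order $p_i$. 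Picard functoriality then forces $\alpha_{p_j}^*$ to carry $\Sigma_{p_i, M}$ into $\Sigma_{p_i, N}$, and since both are cyclic of order $p_i - 1$ \upto, the restriction is an isomorphism \upto. Iterating over all $j\neq i$ identifies $\Sigma_{p_i, N}$ (\upto) with $\alpha^*(\Sigma_{p_i, p_i})$, where $\alpha^*\colon J_0(p_i)\to J_0(N)$ is the resulting composite Picard pullback.

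For $T_r$ with $r\nmid N$ and for $U_{p_i}$, the eigenvalues follow at once: each of these operators commutes with every $\alpha_{p_j}^*$ appearing in $\alpha^*$ (the attached prime differs from each $p_j$), so Mazur's prime-level results $T_r = r+1$ and $U_{p_i} = 1$ on $\Sigma_{p_i, p_i}$ propagate to $\Sigma_{p_i, N}$.

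The essential new input is $U_{p_j}$ for $j\neq i$. For $\sigma\in \Sigma_{p_i, M}$ with $M = N/p_j$, the formula from \textsection\ref{sec:Eisensteinseries} gives
\[
U_{p_j}(\alpha_{p_j}^*(\sigma)) \;=\; \alpha_{p_j}^*(T_{p_j}\sigma) - \beta_{p_j}^*(\sigma).
\]
By induction on the number of prime divisors of the level, $T_{p_j}\sigma = (p_j + 1)\sigma$. Ribet's theorem identifies $\ker(\gamma)$ with the antidiagonal copy of $\Sigma_M$ in $J_0(M)^2$ \upto, so $\alpha_{p_j}^*(\sigma) = \beta_{p_j}^*(\sigma)$ in $J_0(N)$. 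Substituting yields
\[
U_{p_j}(\alpha_{p_j}^*(\sigma)) \;=\; (p_j + 1)\,\alpha_{p_j}^*(\sigma) - \alpha_{p_j}^*(\sigma) \;=\; p_j\,\alpha_{p_j}^*(\sigma),
\]
as required.

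The most delicate point is verifying, in the first step, that $\alpha_{p_j}^*$ really induces an isogeny of the $p_i$-components of prime-to-$6$ degree, rather than merely an inclusion. This is exactly where Ribet's identification of $\ker(\gamma)$ is decisive, since it forces $\alpha_{p_j}^*$ and $\beta_{p_j}^*$ to agree on $\Sigma_M$ modulo $2$-torsion; combined with the Chinese remainder decomposition $\Sigma_N\simeq \Sigma_{p_1}\times\cdots\times \Sigma_{p_t}$ and the cardinality of $\Sigma_{p_i, p_i}$ recalled in \textsection\ref{sec:cuspidalgroup}, this pins down the pullback as the expected isogeny. Everything else in the argument is a formal consequence of Mazur's prime-level computation.
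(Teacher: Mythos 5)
Your argument is correct in substance but follows a genuinely different route from the paper. The paper's proof is essentially a citation: Ling--Oesterl\'e (Theorem 6 of \cite{LO90}) prove that $T_p$ acts by $p$ on the whole Shimura subgroup for every prime $p\mid N$ (and that $T_r$ acts by $r+1$ for $r\nmid N$), and since the order of $\Sigma_{p_i}$ divides $p_i-1$, acting by $p_i$ coincides with acting by $1$, which yields all three statements at once. You instead re-derive the relevant case of their theorem by induction on the number of prime factors of the level: you realize $\Sigma_{p_i}$ (up to $2$- and $3$-primary parts) as a Picard pullback of Mazur's prime-level Shimura subgroup along degeneracy maps, handle $T_r$ and $U_{p_i}$ by commutation of $\alpha_{p_j}^*$ with Hecke operators away from $p_j$, and for $U_{p_j}$ combine the old-subvariety relation $U_{p_j}\alpha_{p_j}^*=\alpha_{p_j}^*T_{p_j}-\beta_{p_j}^*$ with Ribet's identification of $\ker\gamma$ with the antidiagonal $\Sigma_M$, so that $\alpha_{p_j}^*\sigma=\beta_{p_j}^*\sigma$ and the eigenvalue $p_j$ drops out. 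This is a legitimate and instructive alternative: it exposes the mechanism behind the Ling--Oesterl\'e computation and uses only tools the paper already invokes in the proof of Proposition \ref{cusp}. Three caveats, though. First, your identification of the $p_i$-component at level $N$ with the pullback holds only up to products of powers of $2$ and $3$, so you prove the proposition only on the prime-to-$6$ part of $\Sigma_{p_i}$; this suffices for every use in the paper (where $\ell>3$), but it is weaker than the statement as written, which the citation covers in full. Second, the injectivity of $\alpha_{p_j}^*$ on $\Sigma_M$ --- which is what pins down the index of the image --- follows from Ribet's theorem by intersecting $\ker\gamma$ with $J_0(M)\times\{0\}$, not, as your closing paragraph suggests, from the agreement of $\alpha_{p_j}^*$ and $\beta_{p_j}^*$ on $\Sigma_M$ (that agreement is the input for the $U_{p_j}$ computation). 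Third, the cartesian property of your square is more than you need: commutativity of the square plus Picard functoriality already gives $\alpha_{p_j}^*(\Sigma_{p_i,M})\subseteq\Sigma_{p_i,N}$, so the delicate fiber-product verification at cusps and elliptic points can be avoided entirely.
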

\begin{proof}
Ling and Oesterl\'e proved that $U_p$ acts on $\Sigma_N$ by $p$ for primes $p \mid N$ \cite[Theorem 6]{LO91}. In our case, since $\Sigma_{p_i}$ has order dividing $p_i-1$, the result follows from their work.
\end{proof}

\subsection{The component group of $J_0(N)$ (over $\F_p$)}
The component group of $J_0(N)$ (over $\F_p$) for square-free level $N$ is explained in the appendix of \cite{M77}.
Let $N=pA$, $(p, A)=1$, and $J:=J_0(N)$. Assume that $A$ is square-free. Then, by the results of Deligne-Rapoport \cite{DR73} and Raynaud \cite{Ra70}, 
$J_{\F_p}$ is an extension of a finite group $\Phi_p(J)$,
\textit{the component group of $J_{\F_p}$}, by a semiabelian variety $J^0$, \textit{the identity component
of $J_{\F_p}$}. Moreover $J^0$ is an extension of $J_0(A)_{/\F_p} \times J_0(A)_{/\F_p}$ by $T$, 
\textit{the torus of $J_{\F_p}$}. 

\begin{prop} \label{appendix}
The order of $\Phi_p(J)$ is equal to $(p-1)\psi(A)$ \upto, and $\Phi_p(J) = \Phi \oplus B$,
where $\Phi$ is generated by the image of $C_{p, N}=P_1-P_p$ and the order of $B$ divides
some product of powers of 2 and 3.
Moreover $\Frob_p$, the Frobenius endomorphism in characteristic $p$, acts by $-pw_p$ on $T$, where $w_p$ is the Atkin-Lehner operator defined in \textsection \ref{sec:heckeoperators}.
\end{prop}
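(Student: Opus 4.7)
The plan is to use the Deligne--Rapoport description of $X_0(N)_{/\F_p}$ together with Grothendieck's computation of the component group via the monodromy pairing. Since $N=pM$ with $p\nmid M$, the special fiber $X_0(N)_{/\F_p}$ is the transverse union of two copies of $X_0(M)_{/\F_p}$, call them $C_0$ and $C_{\infty}$, meeting at the supersingular points of $X_0(M)_{/\F_p}$. This immediately yields the extension
$$
0 \to T \to J^0 \to J_0(M)_{/\F_p} \times J_0(M)_{/\F_p} \to 0,
$$
where the character group $X$ of the torus $T$ is the group of degree-zero divisors supported on the supersingular locus.

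For the order of $\Phi_p(J)$, I would identify it with the cokernel of the monodromy pairing $X\to X^{\vee}$, which is the intersection form on the dual graph of $X_0(N)_{/\F_p}$ weighted by $1/|\Aut|$ at each supersingular point. Running the Eichler mass formula through this determinant gives the order $(p-1)\psi(M)$ \upto, the small-prime ambiguity coming from the extra automorphisms of supersingular elliptic curves at $j=0$ and $j=1728$. To identify $\Phi$, I would observe that under Deligne--Rapoport reduction the cusp $P_1=0$ specializes to $C_0$ while $P_p=i\infty$ specializes to $C_{\infty}$, so $C_{p,M}=P_1-P_p$ maps to a nonzero element of $\Phi_p(J)$, and a direct computation of its order through the same monodromy pairing gives a cyclic subgroup of order $(p-1)\psi(M)$ \upto. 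Since this matches $|\Phi_p(J)|$ up to powers of $2$ and $3$, the quotient $A:=\Phi_p(J)/\Phi$ is annihilated by a product of powers of $2$ and $3$, and the structure theorem for finite abelian groups produces a complement $A$ realizing $\Phi_p(J)=\Phi\oplus A$ at the primes larger than $3$, with $A$ absorbing the whole $2,3$-part.

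Finally, for the action of $\Frob_p$ on $T$, the character group $X$ is generated by the supersingular points of $X_0(M)_{/\overline{\F}_p}$, viewed as the singular points of $X_0(N)_{/\overline{\F}_p}$. The Atkin--Lehner involution $w_p$ swaps the two components $C_0$ and $C_{\infty}$, hence acts on $X$ by the natural involution exchanging the two branches at each node, while the $p$-power Frobenius acts on the supersingular locus through its action on the supersingular elliptic curves. Combining the Eichler--Shimura congruence at $p$ with this geometric description of $w_p$, as worked out by Ribet on pages 444--446 of \cite{R90}, gives $\Frob_p=-p\cdot w_p$ on $T$. The main obstacle is the middle step: establishing that the image of $C_{p,M}$ generates a cyclic subgroup of the full order $(p-1)\psi(M)$ \upto, rather than a proper quotient, which requires the explicit values of the monodromy pairing on the supersingular divisors and careful bookkeeping of the $2$- and $3$-power contributions to absorb them into $A$.
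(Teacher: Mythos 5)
Your overall strategy --- Deligne--Rapoport's description of $X_0(pM)_{/\F_p}$ as two copies of $X_0(M)_{/\F_p}$ crossing at the supersingular points, together with Grothendieck's monodromy pairing on the character group of the torus --- is exactly the method of the source that the paper itself relies on: the paper's proof is essentially a citation of the Mazur--Rapoport appendix to \cite{M77} (which also supplies the action of $\Frob_p$ on $T$), plus one small but genuinely needed observation, namely that $P_p$ and $P_N$ reduce to the same component of the special fiber at $p$, so that $P_1-P_p$ and $P_1-P_N$ have the same image in $\Phi_p(J)$ and the generator statement for $C_{p,M}=P_1-P_p$ follows from Mazur--Rapoport's statement for $0-i\infty=P_1-P_N$. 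You instead write ``$P_p=i\infty$'', which is false: $P_N=i\infty$, while $P_p$ is the cusp $1/p$. What is true, and what your argument actually needs, is precisely the component observation just described (the $p$-part of the level structure at $P_p$ is multiplicative, so $P_p$ specializes to the same component as $i\infty$).

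The more serious problem is that your proposal does not prove the quantitative claims. The order of $\Phi_p(J)$ is waved at via ``running the Eichler mass formula through this determinant'', and, by your own admission, the key point --- that the image of $P_1-P_p$ has full order $(p-1)\psi(M)$ up to powers of $2$ and $3$, rather than generating a proper quotient --- is left as the ``main obstacle''; but that statement \emph{is} the proposition, so what you have is an outline of the Mazur--Rapoport computation, not a proof. Moreover, the deduction of $\Phi_p(J)=\Phi\oplus A$ from the structure theorem is not valid as stated: a cyclic subgroup whose index is a product of powers of $2$ and $3$ need not be a direct summand (the order-$2$ subgroup of $\Z/{4\Z}$ already fails), so the splitting with $\Phi$ the full cyclic group generated by the cusp requires the explicit group structure determined in the appendix; only the prime-to-$6$ part splits off formally (which is all the paper uses later, but is not what you claimed). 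Finally, for $\Frob_p=-pw_p$ on $T$ you fall back on citing \cite{R90}; that is legitimate, but then your write-up is, like the paper's, ultimately a citation --- and the portions you do argue yourself are the ones containing the gaps above.
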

\begin{proof}
This is the main result of the appendix in \cite{M77} by Mazur and Rapoport.
Since $P_p$ and $P_N$ lie in the same component of the N\'eron model of $X_0(pA)$ over $\F_p$, 
the elements $P_1-P_p$ and $P_1-P_N$ generate the same group $\Phi$ in \textit{loc. cit.}
\end{proof}

\begin{rem}\label{torus}
Since the Hecke action on $T$ factors through the $p$-new quotient of $\T(N)$ \cite[Proposition 3.7]{R90}, $U_p+w_p$ annihilates $T$. Therefore $\Frob_p$ acts by $pU_p$ on $T$.
\end{rem}

\begin{prop}\label{comp}
On $\Phi$, $U_p$ acts by $1$, $U_q$ acts by $q$ for $q \mid A$ primes, and $T_r$ acts by $r+1$
for $r \nmid N$ primes. Moreover for a prime $\ell>3$, $\Phi_p(J)[\ell]$, the $\ell$-torsion elements in 
$\Phi_p(J)$, is equal to $\Phi[\ell]$, and $\Phi[\ell] \simeq \zell$ as groups if $\ell \mid (p-1)\psi(A)$.
\end{prop}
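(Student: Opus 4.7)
The plan is to derive the three parts of the statement directly from Propositions~\ref{cusp} and~\ref{appendix}, which have already done the main work.

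First, with $N=pM$, the element $P_1-P_p\in J_0(N)$ is precisely the cusp sum $C_{p,N}$ in the notation of Section~\ref{sec:cuspidalgroup} (take the ``$M$'' there to be our prime $p$, so that $s=1$, $p_1=p$, and $p_2,\ldots,p_t$ are the prime divisors of $M$). Proposition~\ref{cusp} then records the eigenvalues of $U_p$, $U_q$ for $q\mid M$, and $T_r$ for $r\nmid N$ on $\langle C_{p,N}\rangle\subset J_0(N)$, and these are exactly $1$, $q$, and $r+1$, respectively. Because the Hecke correspondences extend to the $\Z$-model of $X_0(N)$ for square-free $N$, these operators act on the N\'eron model of $J_0(N)$ over $\Z$ and commute with specialization to the component group $\Phi_p(J)$. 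Consequently the same eigenvalues persist on the image $\Phi$ of $\langle C_{p,N}\rangle$ in $\Phi_p(J)$, giving the first assertion.

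Next, for the $\ell$-torsion claim, Proposition~\ref{appendix} supplies a decomposition $\Phi_p(J)=\Phi\oplus A$ with $|A|$ dividing a product of powers of $2$ and $3$. Since $\ell>3$, we have $A[\ell]=0$, and therefore $\Phi_p(J)[\ell]=\Phi[\ell]$ at once.

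Finally, $\Phi$ is cyclic because it is generated by a single element, so its $\ell$-primary part is cyclic and $\Phi[\ell]$ equals either $0$ or $\Z/\ell\Z$. Proposition~\ref{appendix} also tells us that $|\Phi_p(J)|=(p-1)\psi(M)$ up to powers of $2$ and $3$, and since $|A|$ absorbs only factors of $2$ and $3$, the order of $\Phi$ has the same $\ell$-part as $(p-1)\psi(M)$ for $\ell>3$. Hence $\ell\mid(p-1)\psi(M)$ forces $\ell\mid|\Phi|$, and therefore $\Phi[\ell]\simeq\Z/\ell\Z$.

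The only delicate point I anticipate is the functoriality step in the first paragraph: checking that the Hecke action on cuspidal divisors in $J_0(N)(\overline{\Q})$ really descends to the action on the component group in the way claimed. This rests on extending Hecke correspondences to the $\Z$-model of $X_0(N)$ for square-free level (already invoked in Section~\ref{sec:not} when the N\'eron model of $J_0(N)$ is introduced); the rest is routine bookkeeping on top of Propositions~\ref{cusp} and~\ref{appendix}.
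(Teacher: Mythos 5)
Your argument is correct and is essentially the paper's own proof: the paper likewise deduces the Hecke eigenvalues on $\Phi$ from Proposition~\ref{cusp} via Hecke-equivariance of the reduction (specialization) map, and gets the $\ell$-torsion statements from the decomposition $\Phi_p(J)=\Phi\oplus A$ and the order count in Proposition~\ref{appendix}, using $\ell>3$ to kill $A[\ell]$ and cyclicity of $\Phi$ to pin down $\Phi[\ell]$. Your extra paragraph on extending Hecke correspondences to the N\'eron model is just an expanded form of the paper's one-line appeal to Hecke equivariance of reduction, so there is nothing substantively different.
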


\begin{proof}
Because the reduction map is Hecke equivariant, this is an easy consequence of Proposition \ref{cusp} and Proposition \ref{appendix}.
\end{proof}

\begin{rem}\label{cuspcomp}
Since the order of $C_{p, N}$ (resp. of $\Phi$) is $\varphi(N)\psi(A)$ (resp. $(p-1)\psi(A)$) \upto, the kernel of the map $\langle C_{p, N} \rangle \twoheadrightarrow \Phi$ is of order $\varphi(N/p)$ \upto. Therefore, if $\ell \nmid \varphi(N/p)$ and $\ell>3$, the order $\ell$ subgroup of $\langle C_{p, N} \rangle$ maps isomorphically into $\Phi[\ell] = \Phi_p(J)[\ell]$. 
\end{rem}

\subsection{Old and new}\label{old and new}
Let $N$ be a square-free integer and let $p$ be a prime divisor of $N$. 
Two degeneracy maps $\alpha_p$ and $\beta_p$ in \textsection \ref{sec:heckeoperators} induce a map
$$
\gamma_p^* : J_0(N/p) \times J_0(N/p) \rightarrow J_0(N)
$$
defined by $\gamma_p^*(x, y):=\alpha_p^*(x)+\beta_p^*(y)$. 
The image of $\gamma_p^*$ is called \textit{the $p$-old subvariety} and the quotient of $J_0(N)$ by the $p$-old subvariety is called \textit{the $p$-new quotient}. The matrix relation 
$U_p = \mat {\tau_p} p {-1} 0$, where $\tau_p$ is the $p^{\text{th}}$ Hecke operator on $J_0(N/p)$ makes $\gamma_p^*$ Hecke-equivariant. The image of $\T(N)$ in the endomorphism ring of the $p$-old subvariety (resp. $p$-new quotient) is called \textit{the $p$-old quotient} (resp. \textit{the $p$-new quotient}), which is denoted by $\T(N)^{p\hyp\old}$ (resp. $\T(N)^{p\hyp\new}$). A maximal ideal of $\T(N)$ is called \textit{$p$-old} (resp. \textit{$p$-new}) if it is still maximal in the $p$-old (resp. $p$-new) quotient. 

The dual of the $p$-new quotient in $J_0(N)$ by the auto duality of Jacobian varieties is called \textit{the $p$-new subvariety}. 
The subvariety of $J_0(N)$ generated by the $p$-old subvarieties for all prime divisors $p$ of $N$ is called \textit{the old subvariety}. The quotient of $J_0(N)$ by the old subvariety is called \textit{the new quotient}, which is denoted by $J_0(N)^{\new}$, and its dual is called \textit{the new subvariety}, which is denoted by $J_0(N)_{\new}$. The old and new subvarieties are stable under the Hecke actions. The image of $\T(N)$ in $\End(J_0(N)^{\new})$ is called \textit{the new quotient} of $\T(N)$, which is denoted by 
$\T(N)^{\new}$. A maximal ideal of $\T(N)$ is called \textit{new} if it is still maximal ideal in $\T(N)^{\new}$.
 
Note that any maximal ideal of $\T(N)$ is either $p$-old or $p$-new (or both). Also, $U_p+w_p=0$ in $\T(N)^{p\hyp\new}$ by Lemma \ref{lem:hecke atkin}, and hence $U_p^2=1$ in $\T(N)^{p\hyp\new}$. For another description of old and new, see \cite[\textsection 3]{R90}.

\section{The index of an Eisenstein ideal}
Recall that for an ideal of $\T(N)$ with $N$ square-free, we call it \textit{Eisenstein} if it contains $T_r-r-1$ for all primes $r$ not dividing the level $N$. Any Eisenstein maximal ideal of $\T(N)$ contains
$$
I_M:=(U_p-1, ~U_q-q, ~T_r-r-1 ~:~ \text{for all primes }~ p \mid M, ~q \mid N/M \text{ and }~ r \nmid N )
$$ 
for some $M>1$. In order to find a criterion for a ideal $\m=(\ell, ~I_M)$ to be maximal, it suffices to compute the index of $I_M$.
\subsection{Square-free level}
Let $N=\prod\limits_{i=1}^{t} p_i$ and $M=\prod\limits_{j=1}^{s} p_j$ for some $1 \leq s \leq t$. 
And let $\T:=\T(N)$. 

Throughout this section, we implicitly use the duality of the Hecke algebra and cusp forms over $\Z[1/N]$-algebras.  See \cite[chap II, \textsection 4]{M77} or \cite[\textsection 2]{Oh14}.

\begin{lem} \label{notz}
The quotient ring $\T/{I_M}$ is isomorphic to $\Z/{n\Z}$ for some integer $n>0$.
\end{lem}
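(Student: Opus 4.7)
The plan is to show that $\T/I_M$ is a cyclic $\Z$-algebra, i.e., generated by the image of $1$. Since $\T$ is finite over $\Z$ (as remarked in Section 2.1.5), so is $\T/I_M$, and hence $\T/I_M \cong \Z/n\Z$ for some integer $n \geq 0$; finiteness forces $n > 0$.

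First I would recall that $\T_N$ is generated as a $\Z$-subalgebra of $\End(J_0(N))$ by the Hecke operators $\{T_n : n \geq 1\}$. Using the standard multiplicative relations
$$T_{mn} = T_m T_n \quad \text{if } \gcd(m,n)=1,$$
$$T_{r^{k+1}} = T_r \, T_{r^k} - r\, T_{r^{k-1}} \quad \text{for primes } r \nmid N,$$
$$T_{p^{k}} = U_p^{k} \quad \text{for primes } p \mid N,$$
every $T_n$ is a $\Z$-polynomial in the prime-indexed operators $\{T_r : r \nmid N\} \cup \{U_p : p \mid N\}$. In particular, $\T$ is generated as a $\Z$-algebra by this set of prime-indexed operators.

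Next, by the very definition of $I_M$, each of these generators reduces to an integer modulo $I_M$: we have $T_r \equiv r+1 \pmod{I_M}$ for every prime $r \nmid N$, $U_{p_i} \equiv 1 \pmod{I_M}$ for $p_i \mid M$, and $U_{p_j} \equiv p_j \pmod{I_M}$ for $p_j \mid N/M$. Therefore every element of $\T$ is congruent modulo $I_M$ to an integer, which says precisely that the natural map $\Z \to \T/I_M$ is surjective. Consequently $\T/I_M \cong \Z/n\Z$ for some $n \geq 0$.

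Finally, to rule out $n=0$ it suffices to observe that $\T/I_M$ is a finite abelian group: $\T$ is a finitely generated $\Z$-module (being contained in the finitely generated ring $\End(J_0(N))$), so the quotient $\T/I_M$ is a finitely generated $\Z$-module, and combined with being a cyclic $\Z$-algebra this forces it to be a finite cyclic group $\Z/n\Z$ with $n>0$. There is no real obstacle here — the lemma is essentially a formal consequence of the definition of $I_M$ together with the standard generation of $\T_N$ by prime-indexed Hecke operators; the actual arithmetic content, namely computing $n$, is the subject of the main theorem of Section 3 and will use Proposition \ref{cusp} (the cuspidal group exhibits a nontrivial quotient of $\T/I_M$) together with the Eisenstein series of Section \ref{sec:Eisensteinseries} to pin down the integer $n$.
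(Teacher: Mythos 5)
Your first step is fine and matches the paper: every $T_n$ is a polynomial in the prime-indexed operators, each of which is congruent to an integer modulo $I_M$, so $\Z \to \T/I_M$ is surjective and $\T/I_M \simeq \Z/n\Z$ with $n \geq 0$. The gap is in your final step, where you claim that $n=0$ is excluded because $\T/I_M$ is finite. Your justification --- that $\T/I_M$ is a finitely generated $\Z$-module and a cyclic $\Z$-algebra, and that this ``forces it to be a finite cyclic group'' --- is not valid: $\Z$ itself is a finitely generated, cyclic $\Z$-module, so nothing formal rules out $\T/I_M \cong \Z$, i.e.\ $n=0$. Ruling out this case is precisely the nontrivial content of the lemma; it is not a formal consequence of the definition of $I_M$, because a priori $I_M$ could be a prime ideal with infinite quotient.

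The paper excludes $n=0$ by an arithmetic argument: if $\T/I_M \cong \Z$, the quotient map $\T \to \Z \subset \C$ would be a ring homomorphism whose values on the Hecke operators are the Fourier coefficients of a cuspidal eigenform of weight $2$ over $\C$ (this uses the duality between $\T$ and $S_2(\Gamma_0(N))$), with $T_r$-eigenvalue $r+1$ for all primes $r \nmid N$. This contradicts the Ramanujan--Petersson bound $|a_r| \leq 2\sqrt{r}$, since $r+1 > 2\sqrt{r}$. In other words, one must use the fact that no characteristic-zero cusp form can have Eisenstein eigenvalues; your proposal never invokes anything of this kind, so as written it does not prove the lemma.
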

\begin{proof}
The natural map $\Z \rightarrow \T/{I_M}$ is surjective, since, modulo $I_M$, the operators $T_p$ are all congruent to integers. 
Let $F:=\sum_{n \geq 1} (T_n \pmod {I_M}) x^n$.
We cannot have $\T/{I_M} = \Z$, for then $F$ would be the $x$-expansion of a cuspidal eigenform over $\C$, which contradicts the Ramanujan-Petersson bounds and hence $\T/{I_M} \simeq \Z/{n\Z}$ for some integer $n>0$.
\end{proof}

\begin{thm}\label{index}
For any prime $y \nmid 2N$, we have
$$
(\T/{I_M}) \otimes_{\Z} \Z_{y} \simeq (\Z/{m\Z}) \otimes_{\Z} \Z_{y},
$$
where $m$ is the numerator of $\frac{\varphi(N)\psi(N/M)}{24}$.
\end{thm}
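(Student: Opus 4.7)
By Lemma~\ref{notz}, $\T/I_M \simeq \Z/n\Z$ for some $n > 0$; hence it suffices to show that $v_y(n) = v_y(m)$ for every prime $y \nmid 2N$, where $v_y$ denotes the $y$-adic valuation. The plan is to sandwich the index by matching upper and lower bounds, treating the two directions independently.

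For the lower bound $v_y(m) \leq v_y(n)$, I would invoke Proposition~\ref{cusp} directly. Since the generators of $I_M$ annihilate $\langle C_{M,N}\rangle$, the $\T$-module structure on this cyclic subgroup of $J_0(N)$ factors through $\T/I_M$, yielding a surjection $\T/I_M \twoheadrightarrow \Z/\text{ord}(C_{M,N})\Z$. The same proposition gives $\text{ord}(C_{M,N}) = m$ up to powers of $2$, so tensoring with $\Z_y$ for any odd $y$ delivers the desired inequality.

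For the upper bound $v_y(n) \leq v_y(m)$, the plan is to exploit the Eisenstein series $E_{M,N}$ and the constant-term computation in Proposition~\ref{Eisensteinseries}. Using the $y$-adic Hecke pairing
$$
(\T \otimes \Z_y) \times S_2(\Gamma_0(N); \Z_y) \rightarrow \Z_y, \qquad (T, f) \mapsto a_1(Tf),
$$
combined with Pontryagin duality, one identifies $\T/I_M \otimes \Z_y$ with a congruence module: a cyclic quotient of order $y^k$ corresponds to a cusp form $f \in S_2(\Gamma_0(N); \Z/y^k\Z)$ whose non-constant Fourier coefficients match the $I_M$-eigensystem, and producing such an $f$ amounts to killing every constant term of $E_{M,N}$ modulo $y^k$. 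By Proposition~\ref{Eisensteinseries}, the decisive constraint is the constant term at the cusp $0$, namely $-\varphi(N)\psi(N/M)/(24N(N/M))$; for $y \nmid 2N$ the denominator $24N(N/M)$ is a $y$-adic unit except that $v_3(24) = 1$, and that single factor of $3$ at $y = 3$ is precisely the one divided out in the definition $m = \num(\varphi(N)\psi(N/M)/3)$. In every case the $y$-adic valuation of this constant term equals $v_y(m)$, forcing $y^k \mid m$.

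The main obstacle will be making this upper bound rigorous at all $2^t$ cusps $P_n$ ($n \mid N$) of $X_0(N)$, not only at $P_1 = 0$ and $P_N = i\infty$. To cancel constant terms at intermediate cusps without disturbing the $I_M$-eigensystem, one should subtract suitable $\Z_y$-multiples of the remaining Eisenstein eigenforms $E_{M', N}$ for $M' \mid N$ with $M' \neq M$; since these inhabit different Hecke isotypic components, such corrections do not leak into the $I_M$-part. The inductive behavior of the level-raising operators $[p]^+$ and $[p]^-$ on constant terms — the machinery already deployed in Propositions~\ref{Eisensteinseries} and \ref{cusp} — is the right computational framework, but verifying that no extraneous $y$-adic denominators slip in during this bookkeeping is where the bulk of the technical work lies.
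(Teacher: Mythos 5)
Your lower bound coincides with the paper's: $I_M$ annihilates $C_{M,N}$, so $\T$ acting on $\langle C_{M,N}\rangle$ factors through $\T/I_M$ and Proposition~\ref{cusp} gives $v_y(m)\le v_y(n)$. The gap is in the upper bound. Your assertion that ``producing such an $f$ amounts to killing every constant term of $E_{M,N}$ modulo $y^k$'' is exactly the non-trivial content of the theorem, and you supply no mechanism for the direction you actually need (existence of $f$ $\Rightarrow$ divisibility of constant terms); it is not a formal consequence of the duality $(T,f)\mapsto a_1(Tf)$. The paper's mechanism for $M\neq N$ is: since $E_{M,N}$ already has constant term $0$ at $i\infty$, the difference $f-E_{M,N}$ has identically vanishing expansion at $i\infty$, hence vanishes by the $q$-expansion principle (the fiber of $X_0(N)$ in characteristic $y\nmid N$ is irreducible), and only then can one read off that its constant term at the single cusp $0$, namely the quantity of Proposition~\ref{Eisensteinseries}, is $\equiv 0 \pmod {y^a}$. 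In particular only the two cusps $i\infty$ and $0$ ever enter; your ``main obstacle'' of the remaining $2^t-2$ cusps is not an obstacle for this direction, and your proposed remedy --- subtracting multiples of the other eigenforms $E_{M',N}$ --- is a device for \emph{constructing} a cuspidal congruence (the lower bound, which you already have from the cuspidal group) and would in any case ruin the vanishing of the expansion at $i\infty$ on which the $q$-expansion argument rests.

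Moreover your single mechanism cannot treat the case $M=N$: there $E_{N,N}$ has nonzero constant term $(-1)^{t+1}\varphi(N)/24$ at $i\infty$ (and nonzero constant terms at all cusps), so $f-E_{N,N}$ is not forced to vanish and the cusp $0$ yields no constraint. The paper instead observes that $24(f-E_{N,N})$ reduces to a nonzero constant and invokes Mazur's Proposition 5.12(iii) --- equivalently, that $e\bmod y$ has filtration $y+1$, so a nonzero constant is not a weight-two form mod $y$ --- to force $y^a\mid 2\varphi(N)$ up to the factor $24$; this input is entirely absent from your outline. Finally, at $y=3$ the correct bookkeeping is to regard $24(f-E_{M,N})$ as a form over $\Z/{3^{a+1}\Z}$ and conclude $3^{a+1}\mid \varphi(N)\psi(N/M)$, matching the division by $3$ in $m$; your remark about ``the single factor of $3$'' points in the right direction but, as stated, the valuation of the constant term need not equal $v_3(m)$ (e.g.\ when $3\nmid\varphi(N)\psi(N/M)$), so this case too needs the paper's explicit argument.
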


\begin{proof}
Let $\T/{I_M} \simeq \Z/{n\Z}$ and let $y \nmid 2N$ be a prime. Let $n=y^a \times b$ and $m=y^c \times d$, where $(y, bd)=1$. Let $J=(y^a, {I_M})$. Then, $\T/J \simeq \Z/{y^a \Z}$ and $
(\T/{I_M}) \otimes_{\Z} \Z_{y} \simeq (\T/J) \otimes_{\Z} \Z_{y} \simeq \Z/{y^a \Z}
$. 

Since the cuspidal divisor $C_{M, N}$ has order $m$ up to powers of 2 by Proposition \ref{cusp}, $\langle C_{M, N} \rangle$ has an order $y^c$ subgroup, say $D$. Because ${I_M}$ annihilates $C_{M, N}$ by Proposition \ref{cusp}, it also kills $D$. Thus, there is a natural surjection 
$$
\T/{I_M} \simeq \Z/{n\Z} \surj \End(D) \simeq \Z/{y^c \Z}.
$$
Therefore $y^c \mid n = y^a \times b$, so $c \leq a$. 

If $a=0$, then there is nothing to prove. Assume that $a>0$. 
We follow the argument in \cite[chap II, \textsection 5]{M77}.
Let $F:= \sum\limits_{n \geq 1} (T_n \pmod {J}) x^n$. It is the $x$-expansion of a cusp form $f$ over the ring $\Z/{y^a \Z}$. Note that $24f$ is a cusp form over $\Z/{24y^a\Z}$. 
Let $\nu=1$ if $y=3$, and $\nu=0$ otherwise.
Let $E_{M, N}$ be an Eisenstein series defined in \textsection \ref{sec:Eisensteinseries}. We divide into two cases.

\begin{enumerate}
\item Case 1 : 
Assume that $M=N$ and let $N=pA$.
Since $24E_{N, N}$ has an integral Fourier expansion at $i\infty$, $24E_{N,N} \pmod {y^{a+\nu}}$ is a modular form over $\Z/{y^{a+\nu}\Z}$. Thus, $24(f-E_{N, N}) \pmod {y^a}$ is a modular form over $\Z/{y^a\Z}$. 
But by Proposition \ref{Eisensteinseries} this is $\varphi(N) \pmod {y^a}$ (up to sign). 
Since $N$ is invertible in $\Z/{y^{a+\nu}\Z}$, by Ohta \cite[Lemma (2.1.1)]{Oh14}, there is a modular form $g$ of level $p$ over $\Z/{y^{a+\nu} \Z}$ such that
$g(Az)=24(f-E_{N, N}) \pmod {y^{a+\nu}}$. Note that the $x$-expansion of $g$ is a constant $\varphi(N)$ (up to sign). By the same argument in \cite[chap II, Proposition 5.12]{M77}, we get $g(z)=0$. 
Therefore $y^{a+\nu} \mid \varphi(N)$, which implies $a\leq c$. Thus, $a=c$ and we have
$$
(\T/{I_M}) \otimes_{\Z} \Z_{y} \simeq (\Z/{m\Z}) \otimes_{\Z} \Z_{y}. 
$$

\item Case 2 : 
Assume that $M\neq N$.
Since $g=(f-E_{M, N}) \pmod {y^{a+\nu}}$, which is a modular form over $\Z/{y^{a+\nu}\Z}$,
has a Fourier expansion at $i\infty$ equal to $0$, it is $0$ (on the irreducible component of $X_0(N)$ that contains $i\infty$) by the $q$-expansion principle \cite[\textsection 1.6]{Ka73}. 
Since $N$ is invertible in $\Z/{y^{a+\nu} \Z}$, 
the cusp $0$ belongs to the same component of $X_0(N)_{/\F_y}$ as $i \infty$, so the constant term of $g$ at the cusp $0$ is $0$.
Since $f$ is a cusp form, its constant term at the cusp $0$ is $0$ and hence the constant term of $E_{M, N}$ at the cusp $0$ is also $0$ modulo $y^{a+\nu}$. It is 
$-\frac{\varphi(N)\psi(N/M)}{24N(N/M)}$ by Proposition \ref{Eisensteinseries}.
Thus, $y^{a+\nu} \mid m = 24 y^c \times d$ and hence $a\leq c$. So, $a=c$ and we have
$$
(\T/{I_M}) \otimes_{\Z} \Z_{y} \simeq (\Z/{m\Z}) \otimes_{\Z} \Z_{y}. 
$$

\end{enumerate}
\end{proof}

\subsection{New Eisenstein ideals}
Let $N := pq$. (Since we assume that $N$ is square-free, $p\neq q$.)
On the new quotient of $J_0(N)$, $U_p$ acts by an involution, which is equal to $-w_p$. Thus, the possible eigenvalues of $U_p$ are either 1 or $-1$. Let
$$
I : = (U_p-1, ~U_q+1, ~T_r-r-1 ~:~ \text{for all primes}~ r \nmid N )
$$
be a ``new" Eisenstein ideal of level $N$.

In this case, we can compute the index of $I$ up to powers of 2. More specifically, let $\T/I \simeq \Z/{n\Z}$ for some $n$. Let $m$ be the numerator of $\frac{q+1}{(p(p+1), 3)}$.

\begin{thm}\label{thm:neweisen}
For any odd prime $y$, we get
$$
(\T/I) \otimes_{\Z} \Z_{y} \simeq (\Z/{m\Z}) \otimes_{\Z} \Z_{y}.
$$ 
\end{thm}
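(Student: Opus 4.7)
The plan mirrors the proof of Theorem \ref{index}, with a twist to accommodate the fact that no characteristic-zero Eisenstein series of level $pq$ has $U_q$-eigenvalue $-1$. First, by the same argument as Lemma \ref{notz}---the Ramanujan--Petersson bound rules out $\T/I \cong \Z$---I would obtain $\T/I \cong \Z/n\Z$ for some $n > 0$; the task reduces to matching $v_y(n)$ with $v_y(m)$ for each odd prime $y$.

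For the lower bound $v_y(n) \ge v_y(m)$, I would use the cuspidal group $\langle C_{p, pq}\rangle \subseteq J_0(pq)$. By Proposition \ref{cusp}, $U_p$ acts as $1$, $U_q$ as $q$, and $T_r$ as $r+1$ on this group, so the subgroup killed by $I$ is exactly its $(q+1)$-torsion. A direct computation using the order formula from Proposition \ref{cusp}, together with the observation that $y$ odd with $y \mid q+1$ forces $y \nmid q-1$, shows the $y$-part of this subgroup equals $y^{v_y(m)}$---both in the generic case and in the $y = 3$ exceptional case, where the denominator $3$ in the cuspidal order precisely accounts for the shift $m = (q+1)/3$. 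The resulting faithful cyclic $\T/I$-module yields the lower bound.

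For the upper bound $v_y(n) \le v_y(m)$, let $y^a = y^{v_y(n)}$ and consider $f(z) = \sum_{k\ge 1}(T_k \bmod (y^a, I))\,x^k$, a mod $y^a$ cusp form of weight $2$ and level $pq$. Its Hecke eigenvalues $(U_p, U_q, T_r) \equiv (1, -1, r+1)$ violate the Ramanujan--Petersson bound, so $f$ must be congruent modulo $y^a$ to a classical Eisenstein eigenform. Among the three Eisenstein eigenforms $E_{pq, pq}, E_{p, pq}, E_{q, pq}$ with $(U_p, U_q)$-eigenvalues $(1, 1), (1, q), (p, 1)$ respectively, only $E_{p, pq}$ is compatible for odd $y$, and the compatibility forces $q \equiv -1 \pmod{y^a}$, i.e.\ $y^a \mid q + 1$. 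Concretely, I would determine constants $c_1, c_2 \in \Z/y^a\Z$ matching the Fourier coefficients of $f$ with $c_1 E_{p, pq} + c_2 E_{pq, pq}$ at $n = 1, p, q$; the vanishing condition at $n = q^2$ then reads $2(q+1) \equiv 0 \pmod{y^a}$. This gives $v_y(n) \le v_y(m)$ for $y > 3$ and for $y = 3$ in the non-exceptional case. In the exceptional case ($y = 3$, $3 \nmid p - 1$, $3 \mid q + 1$), I would extract the extra factor of $3$ exactly as in Case 3 of Theorem \ref{index}: the combination $24(f - E_{p, pq}) \pmod{24 \cdot 3^a}$ is a modular form modulo $3^{a+1}$ with vanishing Fourier expansion at $i\infty$; its constant term at the cusp $0$---computed from Proposition \ref{Eisensteinseries} as $-(p-1)(q-1)(q+1)/(pq^2)$---must vanish modulo $3^{a+1}$, yielding $3^{a+1} \mid q + 1$.

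The principal obstacle is the step-3 assertion that a mod $y^a$ cuspidal eigenform with Eisenstein Hecke eigenvalues must be congruent modulo $y^a$ to a classical Eisenstein series; formalizing this invokes the Deligne--Rapoport/Katz theory of integral modular forms together with the $q$-expansion principle, essentially as in Mazur's prime-level argument.
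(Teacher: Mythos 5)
Your lower bound is fine and is essentially the paper's: the $y$-part of the $(q+1)$-torsion of $\langle C_{p,pq}\rangle$ is annihilated by $I$, and the surjection $\T/I \twoheadrightarrow \End(D)$ gives $y^{v_y(m)} \mid n$ (keeping track, as you do, that the order in Proposition \ref{cusp} is only known up to powers of $2$). The genuine gap is in the upper bound, which is the heart of the theorem. Your key assertion --- that the mod $y^a$ cuspidal eigenform $f$ with $(U_p,U_q,T_r)\equiv(1,-1,r+1)$ must be congruent modulo $y^a$ to a classical Eisenstein eigenform (or to $c_1E_{p,pq}+c_2E_{pq,pq}$) --- is not proved, and the justification you offer does not work: the Ramanujan--Petersson bound is an archimedean statement about characteristic-zero cusp forms and has no content over $\Z/y^a\Z$ (it only rules out $\T/I\simeq\Z$, which you already used), and the $q$-expansion principle by itself does not produce such a congruence. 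Worse, the assertion is essentially equivalent to the conclusion: once $f\equiv c_1E_{p,pq}+c_2E_{pq,pq} \pmod {y^a}$ with coefficients matched at $q$ and $q^2$, the relation $2(q+1)\equiv 0 \pmod{y^a}$ is immediate, so assuming it is circular. The missing idea is the paper's level-lowering step: since $f$ and $E_{p,pq}$ share all eigenvalues away from $q$, the form $24(f-E_{p,pq})$ has $x$-expansion supported on powers of $x^q$, hence by Mazur's Lemma 5.9 of Chapter II of \cite{M77} (which requires $q$ invertible in $\Z/y^a\Z$) it equals $g(x^q)$ for a modular form $g$ of level $p$ over $\Z/y^a\Z$; computing $g=-24(q+1)\sum c_nx^n$ with $c_1=1$, $c_q=q-1$ and invoking the uniqueness of the level-$p$ Eisenstein eigenform, whose $T_q$-eigenvalue is $q+1$, forces $y^a\mid 24(q+1)$. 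Without this descent, your ``match at $n=1,p,q$ and impose vanishing at $q^2$'' has no basis.

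Two further problems. First, the case $y=q$ is included in the statement (any odd $y$) but is untouched by your argument and cannot be handled by it, since $q$ is not invertible modulo $y^a$ and neither the descent lemma nor any evaluation at cusps applies; the paper's Case 5 treats it separately, showing $(q,I)$ cannot be maximal via the shape $1\oplus\chi$ of the associated Galois representation and the fact that $U_q\equiv 1$ or $q$ on the $q$-old space, which contradicts $U_q\equiv -1$. Second, in your exceptional $y=3$ case the claim that $24(f-E_{p,pq})\pmod{3^{a+1}}$ has vanishing Fourier expansion at $i\infty$ is false a priori: the coefficients at $n=q^jk$ with $(k,q)=1$ are $24((-1)^j-q^j)a_k(f)$, which are not zero; their vanishing modulo $3^{a+1}$ is exactly what the level-$p$ descent argument, run modulo $3^{a+1}$, establishes, and only after that step can one read off the constant term at an etale cusp (the paper uses $P_p$; your cusp $0$ would also do numerically) to get $3^{a+1}\mid q+1$. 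So the proposal reproduces the easy half of the paper's proof but leaves its central mechanism, and one whole case, unproven.
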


\begin{proof}
Let $n=y^a \times b$ and $m=y^c \times d$, where $(y, bd)=1$.
Let $J = (y^a, I)$. Then $\T/J \simeq \Z/{y^a \Z}$. 
Note that the order of the cuspidal divisor $\frac{(p-1)(q-1)}{3^t}C_{p, pq} =\frac{(p-1)(q-1)}{3^t}(P_1-P_p)$ in $J_0(pq)$ is the numerator of $\frac{q+1}{3^{1-t}}$ up to powers of $2$, where $t=1$ if $y=3$ and $p\equiv 1 \pmod 3$, and $t=0$ otherwise. Hence 
$\langle C_{p, pq} \rangle$ contains a subgroup of order $y^c$ and it is annihilated by $J$. Thus, there is a natural surjection
$$
\T/J \simeq \Z/{y^a \Z} \surj \End(D) \simeq \Z/{y^c \Z}.
$$
Therefore we get $c \leq a$. 

If $a=0$ there is nothing to prove. Assume that $a>0$. 
Then, $\m:=(y, I)$ is a maximal ideal of $\T(pq)$. By \cite[Lemma 2.1]{Yoo3}, $U_q \pmod {\m}$ is either $1$ or $q$. Therefore $q \equiv -1 \pmod {y}$. 

Let $F = \sum\limits_{n \geq 1} (T_n \pmod {J}) x^n$. It is the $x$-expansion of a cusp form $f$ over the ring $\Z/{y^a \Z}$. Consider $g=24(f-E_{p,pq}) \pmod {24y^a} =24\sum\limits_{n\geq 1} a_n x^n$, where $E_{p,pq}$ is the Eisenstein series in \textsection \ref{sec:Eisensteinseries}. 

\begin{enumerate}
\item
Suppose that $y\nmid 6p$. Let $G:=g \pmod {y^a}$.
Note that $pq$ is invertible in $\Z/{y^a\Z}$ and $a_n=0$ for $(n, q)=1$. Thus, by Ohta \cite[Lemma (2.1.1)]{Oh14}, there is a modular form $H$ of level $p$ over the ring $\Z/{y^a \Z}$, such that $H(q\tau) = G(\tau)= 24\sum\limits_{n\geq 1} a_n x^n$. Computing its coefficients, we get 
$H(\tau)=-24(q+1)\sum c_n x^n$, where $c_r=r+1$ for primes $r \nmid N$, $c_1=1$, $c_p=1$, and $c_q = q-1$. 
This is also a cuspidal eigenform of weight 2 for the Hecke operators $T_r$, for all primes $r \neq q$. By Ribet \cite[p. 491]{Wi95}, we get $\T(p) \otimes_{\Z} \Z_y=\T(p)^{(q)}\otimes_{\Z} \Z_y$, where $\T(p)^{(q)}$ is the $\Z$-subalgebra of $\End(J_0(p))$ generated by all $T_n$ with $(n, q)=1$. Therefore $H$ is a cuspidal eigenform of weight 2 for all the Hecke operators.

Suppose that $a>c$, i.e., $24(q+1)$ is not divisible by $y^a$. 
Then, $h:=H(\tau)/{y^c} \pmod y$ is a non-zero mod $y$ cuspidal eigenform of weight 2 and level $p$. Let $\m$ be the maximal ideal associated to $h$ and let $\rho_{\m}$ be the two dimensional semisimple representation of $\GQ$ associated to $\m$. Then, by the Brauer-Nesbitt theorem and Chebotarev density theorem, $\rho_{\m} \simeq 1\oplus \chi_{y}$, where $\chi_y$ is the mod $y$ cyclotomic character. By Mazur, this implies that
$p\equiv 1 \pmod y$ and there is a cuspidal eigenform $\eta$ (over $\Qbar$) of weight $2$ and level $p$ such that $\eta \equiv E_{p, p} \pmod y$. 

By the above lemma of Ribet and the duality, if two cuspidal eigenforms of weight $2$ and level $p$ (in this case over $\F_y$) have the same Fourier coefficients for all $n$ with $(n, q)=1$, then they have the same Fourier expansions. 
Let $x:=-24(q+1)/{y^c}$, which is a unit in $\Z_{y}$ by our assumption. Then, $h$ and $x\eta$ satisfy the condition. 
Therefore the $q^{\text{th}}$ coefficients are equal, i.e., $x(q-1)\equiv x(q+1) \pmod y$, which is a contradiction because $y$ is a prime greater than 3. Thus, $a\leq c$ as desired.

\item
Suppose that $y=3$ and $y \neq p$. Let $G:= g \pmod {3^{a+1}}$.
Note that the level $pq$ is invertible in $\Z/{3^{a+1}\Z}$ and $a_n=0$ for $(n, q)=1$. By the same argument as above, the $x$-expansion of $G$ is $0$. By the $q$-expansion principle \cite[\textsection 1.6]{Ka73}, we have $G=0$. Hence, the constant term of the Fourier expansion of $G$ at $P_p$ is $0$ modulo $3^{a+1}$. 
By the same computation of Proposition \ref{FJ}, it is $-\frac{(p-1)(q^2-1)}{q^2}$. Therefore $3^{a+1}$ divides $(p-1)(q+1)$ and $24(q+1)$. If $p \equiv 1 \pmod 3$, then $m=q+1$; and otherwise $m=\frac{q+1}{3}$. In both cases, we get $a \leq c$.

\item
Suppose that $y=p\geq 3$. Then, $\m$ is neither $p$-old nor $q$-old by Mazur \cite{M77} because $(p-1)(q-1)\not\equiv 0 \pmod y$ . Hence it is new. Thus, we have $(\T/I)\otimes \Z_y \simeq  (\T^{\new}/I)\otimes \Z_y$, where $\T^{\new}$ is the new quotient of $\T$. 

Let $\mathcal{T}(pq)$ be the $\Z$-subalgebra of $\End(J_0(pq))$ generated by $w_p$, $w_q$ and $T_r$ for primes $r \nmid pq$. Since $J_0(pq)^{\new}$ is stable under $\mathcal{T}(pq)$, we can define $\mathcal{T}^{\new}$ to be the image of $\mathcal{T}(pq)$ in $\End(J_0(pq)^{\new})$.
Since $U_p+w_p=U_q+w_q=0$ in $\End(J_0(pq)^{\new})$ by Lemma \ref{lem:hecke atkin}, $\cT^{\new}$ and $\T^{\new}$ are isomorphic, and 
$$
\cT_1:=\cT^{\new}/(w_p+1, w_q-1) \simeq \T^{\new}/(U_p-1, U_q+1).
$$

Let $\mathcal{I}=(T_r-r-1 : \text{ for primes } r \nmid pq) \subseteq \mathcal{T}(pq)$. Let 
$\mathcal{T}_0 := \mathcal{T}(pq) / (w_p+1,~w_q-1)$ and let $\cI_0$ be the image of $\cI$ in $\mathcal{T}_0$. 
Then, there is a natural surjection $\phi$ from $\cT_0$ to $\cT_1$, and let $\cI_1$ be the image of $\cI_0$ by $\phi$. Then,
$$
\xymatrix{
\cT_0/{\cI_0}  \otimes \Z_y 
\ar@{->>}[r]^-{\phi} & \cT_1/{\cI_1} \otimes \Z_y 
\ar[r]^-{\simeq} & (\T^{\new}/I) \otimes \Z_y \ar[r]^-{\simeq} & (\T/I)\otimes \Z_y.
}
$$

Now, we recall the result by Ohta. By \cite[Theorem (3.1.3)]{Oh14},
$$\mathcal{T}_0/\mathcal{I}_0 \otimes \Z_y \simeq (\Z/B\Z) \otimes \Z_y,$$ 
where $B$ is the numerator of $\frac{(p-1)(q+1)}{3}$. 
Therefore $y^a$ divides $B$, which implies that $a \leq c$. 
\end{enumerate}

In conclusion, we get $a \leq c$ in all cases and hence $a=c$ as desired.
\end{proof}

By the same argument as above (in particular, the method used in the proof of Case 1), we can prove the following. 
\begin{thm}
Let $N=Mq$, $(M,q)=1$, and $y \nmid 6N$.
Let $
I=(U_p-1, ~U_q+1, ~T_r-r-1 ~:~ \text{for all primes}~ p \mid M~ \text{and}~ r \nmid N ).
$
Then, we have
$$
(\T/I) \otimes_{\Z} \Z_{y} \simeq (\Z/{(q+1)\Z}) \otimes_{\Z} \Z_{y}.
$$
\end{thm}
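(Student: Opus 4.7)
My plan is to mirror the argument of Case 1 in the proof of Theorem \ref{thm:neweisen}. By the same reasoning as in Lemma \ref{notz}, $\T/I \simeq \Z/n\Z$ for some $n > 0$; writing $n = y^a b$ and $q+1 = y^c d$ with $(y, bd) = 1$, my goal is to establish $a = c$.

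For the inequality $c \leq a$, the cuspidal divisor $C_{M, N}$ satisfies, by Proposition \ref{cusp}, $U_p C_{M, N} = C_{M, N}$ for $p \mid M$, $U_q C_{M, N} = q\, C_{M, N}$, and $T_r C_{M, N} = (r+1)\, C_{M, N}$ for $r \nmid N$, and its order is the numerator of $\varphi(M)(q-1)(q+1)/3$ up to powers of $2$. Since $y \nmid 6N$, the cyclic group $\langle C_{M, N} \rangle$ contains a unique subgroup $D$ of order $y^c$, on which $U_q + 1$ acts as $q + 1 \equiv 0 \pmod{y^c}$. Hence $I$ annihilates $D$, giving a surjection $\T/I \twoheadrightarrow \End(D) \simeq \Z/{y^c\Z}$ and thus $c \leq a$.

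For the reverse inequality $a \leq c$, I would assume $a > 0$, set $J = (y^a, I)$, and form the mod $y^a$ cusp form $f(z) = \sum_{n \geq 1} (T_n \bmod J)\, x^n$ of level $N$. Comparing with the Eisenstein eigenform $E_{M, N}$ of Proposition \ref{Eisensteinseries}, whose Hecke eigenvalues agree with those of $f$ except that $U_q$ acts by $q$ instead of $-1$, I would form the mod $y^a$ modular form $24(f - E_{M, N})$ of level $N$; its Fourier expansion at $i\infty$ is supported on $q$-divisible indices. Since $q$ is a unit modulo $y^a$, Mazur's Lemma 5.9 of Chapter II in \cite{M77} yields a mod $y^a$ modular form $g$ of level $M = N/q$ with $g(x^q) = 24(f - E_{M, N}) \bmod y^a$. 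A direct bookkeeping gives $b_1(g) = -24(q+1)$, $b_q(g) = -24(q+1)(q-1)$, and $b_n(g) = -24(q+1)\, a_n(f)$ for $(n, q) = 1$.

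By multiplicativity, $g$ is an eigenform at level $M$ for every $T_r$ with $r \neq q$, with eigenvalues $U_p = 1$ for $p \mid M$ and $T_r = r+1$ for $r \nmid N$. Assuming that $\T_M$ is generated as a $\Z$-algebra by $\{T_r : r \neq q\}$, $g$ is an eigenform for the full Hecke algebra at level $M$; since $E_{M, M}$ is the unique normalized Eisenstein eigenform at level $M$ with this eigensystem, we must have $g \equiv -24(q+1)\, E_{M, M} \pmod{y^a}$. But $E_{M, M}$ has $T_q$-eigenvalue $q+1$, so $b_q(g) = -24(q+1)^2$; comparing with $b_q(g) = -24(q+1)(q-1)$ forces $-48(q+1) \equiv 0 \pmod{y^a}$, and since $y \nmid 48$ we conclude $y^a \mid q+1$, i.e., $a \leq c$. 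The main obstacle in this plan is the justification that $g$ is a genuine Hecke eigenform at level $M$, in particular for $T_q$: this rests on the generation statement for $\T_M$ away from one prime, which is the same technical point already invoked in Case 1 of Theorem \ref{thm:neweisen}.
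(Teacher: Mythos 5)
Your proposal is correct and is essentially the paper's own proof: the paper disposes of this theorem by saying it follows ``by the same argument as in Case 1'' of Theorem \ref{thm:neweisen}, and your adaptation --- the order-$y^c$ subgroup of $\langle C_{M,N}\rangle$ for $c\leq a$, then Mazur's Lemma 5.9 descent of $24(f-E_{M,N})$ to level $M$ and comparison of the $T_q$-coefficient with the eigenform $E_{M,M}$ (singled out by its $U_p$-eigenvalues, with $T_q$-eigenvalue $q+1$) to get $y^a\mid 48(q+1)$ and hence $a\leq c$ --- is exactly that argument transposed from level $pq$ to level $Mq$. The technical point you flag, that $\T_M$ is generated by the Hecke operators away from $q$ so that $g$ is forced to be an eigenform for $T_q$ as well, is the same step the paper itself invokes without proof at prime level in Case 1, so your proposal introduces no gap beyond what the paper already assumes.
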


\section{Multiplicity one}\label{sec:multi}
\subsection{Square-free level}
As before let $N=\prod\limits_{i=1}^{t} p_i$ and $M=\prod\limits_{j=1}^{s} p_j$ for some $1 \leq s \leq t$.
Let $\T:=\T(N)$ and $J:=J_0(N)$. For an ideal $I \subseteq \T$, we define the kernel of $I$ for $J$ as follows:
$$
J[I] := \{ x \in J_0(N)(\overline{\Q}) : Tx = 0 ~~\text{for all}~~ T\in I \}.
$$
Since $(\Ta_{\m} J) \otimes \Q$ is free of rank $2$ over $\T_{\m} \otimes \Q$ (cf. \cite[chap II, Lemma 7.7]{M77}) where $\Ta_{\m} J:= \lim\limits_{\leftarrow n} J[\m^n]$, $J[\m] \neq 0$ for any maximal ideal $\m$. Moreover, we get $\dim J[\m] \geq 2$ by Nakayama's lemma. 

As we explained in the introduction, for a non-Eisenstein maximal ideal $\m$, there is a notion of multiplicity. On the other hand, there is no natural one for an Eisenstein ideal $\m$. Instead, we define it as follows.
\begin{defn}
A multiplicity one theorem holds for $\m$ if $\dim_{\T/{\m}} J[\m]=2$. 
\end{defn}
In contrast to the non-Eisenstein case, the multiplicity one question for an Eisenstein ideal $\m$ has not been discussed much before. Mazur \cite{M77} proved that when $N$ is prime, $J[\m] \simeq \zell \oplus \muell$ for an Eisenstein maximal ideal $\m$ of residue characteristic $\ell \geq 3$.

Assume that $\ell>3$ is a prime and $(\ell, N)=1$. Let $\m:=(\ell, I_M)$, where
$$
I_M=(U_{p_i}-1, ~U_{p_j}-p_j, ~T_r-r-1 ~:~ 1 \leq i \leq s, ~s< j \leq t,~\text{for all primes}~ r \nmid N ).
$$
By the result of Theorem \ref{index}, $\m$ is maximal if and only if $\ell \mid \varphi(N)\psi(N/M)$. Thus, we assume that $\ell \mid \varphi(N)\psi(N/M)$. If $p_j \equiv 1 \modl $ for some $s < j \leq t$, then 
$\m = (\ell, I_{M\times p_j})$. Thus, we further assume that $p_j \not\equiv 1 \modl$ for all $s< j \leq t$. 
So, we have $s(\m)=s$, $1 \leq s(\m) \leq t$, and $0\leq s_0(\m) \leq s(\m)$.
(For the definition of notation, see the introduction.)

Let $S_N$ be the set of prime divisors of $N$ and let $S_{\m}$ be the set of primes at which $J[\m]$ is ramified.
Then, $S_{\m} \subseteq S_N \cup \{\ell \}$ by Igusa \cite{Ig59} and $ \varpi_{\ell}(S_{\m})\leq  \varpi_{\ell}(S_N)$.

\begin{thm}[Multiplicity one]\label{multi}
Assume one of the following.
\begin{enumerate}
\item $\varpi_{\ell}(S_N)= 1$.
\item $t=s+1$ and $\ell \nmid \varphi(N)$.
\end{enumerate}
Then a multiplicity one theorem holds for $\m$, i.e., $J[\m]$ is of dimension 2 over $\T/{\m}$.
\end{thm}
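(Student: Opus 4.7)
My plan is to prove $\dim_{\T/\m} J[\m] = 2$ by exhibiting two independent $\m$-torsion subschemes for the lower bound and then using a connected--\'etale decomposition combined with component-group bounds for the upper bound.

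For the lower bound, the cuspidal subgroup $\langle C_{M,N}\rangle$ is annihilated by $I_M$ by Proposition \ref{cusp}, and its order is divisible by $\ell$: Theorem \ref{index} identifies this order with the numerator of $\frac{\varphi(N)\psi(N/M)}{3}$ up to powers of $2$, and the maximality hypothesis $\ell \mid \varphi(N)\psi(N/M)$ (with $\ell > 3$) guarantees divisibility by $\ell$. Its unique order-$\ell$ subgroup has trivial Galois action, giving $\zl \inj J[\m]$. For the multiplicative factor, the maximality of $\m$ together with the standing hypothesis $p_j \not\equiv 1 \pmod \ell$ for $j > s$ forces either (i) some $p_i \equiv 1 \pmod \ell$ with $i \leq s$, or (ii) some $p_j \equiv -1 \pmod \ell$ with $j > s$. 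In case (i), Proposition \ref{Shimura} shows $\Sigma_{p_i}[\ell]$ is killed by $\m$, yielding $\mul \inj J[\m]$. In case (ii), the Atkin--Lehner twist of $\Sigma_{p_j}[\ell]$ (together with the relation $U_{p_j}+w_{p_j}=\alpha_{p_j}^* \circ \beta_{p_j,*}$ and the fact that $w_{p_j}$ inverts a multiplicative-type group) supplies the required $\mul$. Either way $\dim J[\m] \geq 2$.

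For the upper bound, I would analyze $V := J[\m]$ as a finite group scheme whose Jordan--H\"older factors lie in $\{\zl,\mul\}$. Since $\ell \nmid N$, $V$ extends to a finite flat group scheme over $\Z_{\ell}$ whose residue field is perfect, so the connected--\'etale sequence splits canonically as $V = V^0 \oplus V^{et}$, with $V^0$ built from $\mul$-factors and $V^{et}$ from $\zl$-factors. To bound $\dim V^{et}$, I would reduce modulo each $p \mid N$: Proposition \ref{comp} together with Remark \ref{cuspcomp} shows $\Phi_p(J)[\ell] \simeq \zl$ when $\ell \mid (p-1)\psi(N/p)$ and is trivial otherwise. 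Under hypothesis (1) exactly one prime $p_* \in S_N$ satisfies $p_* \equiv \pm 1 \pmod \ell$, so at most one component group contributes nontrivially, forcing $\dim V^{et} \leq 1$; a Cartier-dual argument using the Shimura subgroup yields $\dim V^0 \leq 1$. Hypothesis (2) reduces to hypothesis (1): $t = s+1$ makes $\psi(N/M) = p_t+1$, and $\ell \nmid \varphi(N)$ combined with $\ell \mid \varphi(N)\psi(N/M)$ forces $p_t \equiv -1 \pmod \ell$ as the unique prime of $S_N$ with $p \equiv \pm 1 \pmod \ell$.

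The main obstacle will be ruling out extra $\zl$-factors in $V^{et}$ (or dually $\mul$-factors in $V^0$) that are not detected by the component-group reduction: an Eisenstein extension class in $\Ext^1(\mul,\zl)$ unramified outside $S_{\m}$ could a priori enlarge $V^{et}$ beyond the cuspidal line. Controlling such extensions is where the hypothesis $\varpi_{\ell}(S_N)=1$ becomes essential, since it forces $S_{\m} \subseteq \{p_*,\ell\}$ and thereby bounds the relevant Ext group, via class field theory and the structure of $\ell$-units in $\Q(\mul)$, to be exactly accounted for by the cuspidal and Shimura subgroups already produced.
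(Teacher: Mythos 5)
Your upper-bound strategy breaks down at several points. First, the connected--\'etale sequence of a finite flat group scheme over $\Z_{\ell}$ does \emph{not} split, and globally $J[\m]$ is in general a \emph{non-split} extension of copies of $\mul$ by $\zl$ (the paper's own Theorem \ref{s1}(4)(b) and the remark after it give examples), so the decomposition $V=V^0\oplus V^{et}$ you build the argument on is not available. Second, component groups at $p\mid N$ do not bound the number of $\zl$-constituents: an \'etale piece of $J[\m]$ can land in the identity component of $J_{/\F_p}$, so triviality of $\Phi_p(J)[\ell]$ for the other primes tells you nothing; the actual bound (at most one $\zl$ Jordan--H\"older factor) comes from ordinarity at $\ell$ and the $q$-expansion principle (Mazur, Ch.~II, Cor.~14.8). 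Dually, Vatsal's theorem bounds $\mu$-type \emph{subgroups}, not the number of $\mul$ Jordan--H\"older factors; bounding those is exactly the hard point, and it is done via Brumer--Kramer's computation $\dim\Ext_{S}(\mul,\zl)=\varpi_{\ell}(S)$ together with, in the case $s(\m)=s_0(\m)=1$, a Serre--Tate/component-group argument showing $J[\m]$ is unramified at the prime $p\equiv 1\modl$, so that $\varpi_{\ell}(S_{\m})=0$. Your closing paragraph names this obstacle but only gestures at ``class field theory and $\ell$-units''; that gesture is the missing proof, not a reduction of it.

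Two further concrete errors. Hypothesis (2) does not reduce to hypothesis (1): $\ell\nmid\varphi(N)$ forces $p_t\equiv -1\modl$, but other prime divisors of $N$ may also be $\equiv -1\modl$, so $\varpi_{\ell}(S_N)$ can be $\geq 2$; the paper treats (2) by a separate argument in characteristic $q=p_t$, using $\Phi_q[\m]=0$, the Frobenius action on the torus, and the vanishing $J_0(N/q)^2[\m]=0$ (the index of $I_{N/q}$ is prime to $\ell$ since $\ell\nmid\varphi(N)$) to get $\dim J[\m]^{I_q}\leq 1$, while the extension structure forces $\dim J[\m]^{I_q}\geq r$. Finally, your lower-bound case (ii) is false: when no $p_i\equiv 1\modl$ with $i\leq s$, there is \emph{no} copy of $\mul$ inside $J[\m]$ at all, precisely because any $\mu$-type subgroup lies in $\Sigma_N$ (Vatsal) and $\Sigma_N[\m]=0$; the Atkin--Lehner twist of $\Sigma_{p_j}$ is not an $\m$-eigensubgroup, since conjugation by $w_{p_j}$ replaces $U_{p_j}$ by its transpose. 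The correct lower bound, which needs no $\mul$, is that the $\m$-adic Tate module has rank $2$ over $\T_{\m}$ (Mazur, Ch.~II, Lemma 7.7).
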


\begin{proof}
It suffices to show that $\dim J[\m] \leq 2$ because $\dim J[\m] \geq 2$.
We follow Mazur's idea in his paper \cite{M77} to analyze $J[\m]$. 
\begin{enumerate}
\item 
Assume that $\varpi_{\ell}(S_N)=1$. Let $p$ be a prime divisor of $N$ such that $p \equiv \pm 1 \modl$. We divide into three cases.
\begin{enumerate}
\item Case 1 : 
Assume that $s_0(\m)=0$, i.e., that $p\equiv -1 \modl$. 
Note that all Jordan-H\"older factors of $J[\m]$ are either $\zell$ or $\muell$ (cf. \cite[chap II, Proposition 14.1]{M77}). 
Moreover, $J[\m]$ can have at most one $\zell$ as its Jordan-H\"older factor by the $q$-expansion principle.
(cf. \cite[Chap II, Corollary 14.8]{M77}, note that $T_{\ell}-1 \in \m$, hence, it is ordinary. See also \cite[Lemma 2.7]{CS08}.) 
Since $C_{M, N} \in J(\Q)$, $\langle C_{M, N} \rangle [\m] \simeq \zell$, so we have $\zell \subseteq J[\m]$. Thus 
$$
0 \rightarrow \zell \rightarrow J[\m] \rightarrow B \rightarrow 0,
$$
where $B$ is a multiplicative group such that all its Jordan-H\"older factors are $\muell$. Since $B$ is annihilated by $T_r-r-1$ for all but finitely many primes $r$, by the theorem of constancy \cite[chap I, Lemma 3.5]{M77}), $B \simeq \muell^{\oplus a}$ for some $a\geq 1$. Since the Shimura subgroup $\Sigma_N$ is a maximal multiplicative subgroup of $J$ by Vatsal \cite[Theorem 1.1]{Va05} and $\Sigma_N[\m]=0$ (because $U_p$ acts as $p$ on $\Sigma_N$, $U_p-1 \in \m$ and $p\not\equiv 1 \modl$), we get $\muell \nsubseteq J[\m]$. 
Let $S_0 = S_N \cup \{\ell\}$, and let
$E:=\Ext_{S_0}(\muell, \zell)$ be the group of extensions of $\muell$ by $\zell$ over $\Z_{S_0}$. 
By Brumer-Kramer \cite[Proposition 4.2.1]{BK14}, the dimension of $E$ over $\F_{\ell}$ is $\varpi_{\ell}(S_0)=\varpi_{\ell}(S_N)$, which is $1$ by assumption. (It is generated by a non-trivial extension only ramified at $\ell$ and $p$.) 
Assume that $\dim J[\m] \geq 3$, then it has a submodule $V$ of dimension 3 that is a nontrivial extension of $\muell \oplus \muell$ by $\zell$. 
Let $\alpha$ (resp. $\beta$) be a natural inclusion of $\muell$ into the first (resp. second) component of $\muell\oplus \muell$
$$
\xymatrix{
0 \ar[r] & \zell \ar[r] & V \ar[r] & \muell \oplus \muell \ar[r] & 0 \\
0 \ar[r] & \zell \ar[r]\ar@{=}[u] & W \ar[r]\ar@<.5ex>[u]^-{\alpha}\ar@<-.5ex>[u]_-{\beta} & \muell \ar[r] \ar@<.5ex>[u]^-{\alpha}\ar@<-.5ex>[u]_-{\beta} & 0.
}
$$
Then $\alpha^*V$ and $\beta^*V$ are two elements in $E$, which is of dimension $1$. Thus, there are $a, b \in \F_{\ell}$ such that
$a \alpha^*V + b \beta^*V = 0$. Let $\gamma = a\alpha + b\beta$, then $\muell \subseteq \gamma^*V \subseteq V$, which is a contradiction.
Therefore $\dim J[\m]=2$. Note that since $J[\m]$ is a non-trivial extension, it is ramified at $p$.

\item Case 2 : 
Assume that $s_0(\m)=1$ but $s(\m)=s>1$, i.e, $p=p_1\equiv 1\modl$ and $p_2 \not\equiv 1\modl$. Then, $\Sigma_N[\m]=0$ because $U_{p_2}$ acts by $p_2$ on $\Sigma_N$. Therefore the same argument as above holds. 

\item Case 3 : 
Assume that $s(\m)=s_0(\m)=1$. Let $p=p_1\equiv 1 \modl$. Note that $\Sigma_N[\m] \simeq \muell$,
hence, $\muell \subseteq J[\m]$ but $\muell \oplus \muell \nsubseteq J[\m]$ from the assumption. 
Let $J[\m]$ be an extension of $\muell^{\oplus a}$ by $\zell$ for some $a\geq 1$.
Let $I_p$ be an inertia subgroup of $\GQ$ at $p$.
By a well known theorem of Serre-Tate \cite{ST68}, the kernel of $\m$ in the mod $p$ reduction of $J$ may be identified with $J[\m]^{I_p}$, the group of $I_p$-invariants. 
Let $J^0$ and $\Phi_p$ be the identity component and the component group of $J_{/\F_p}$, respectively.
Since $\ell \nmid \varphi(N/p)$ and $\Phi_p$ is generated by the image of $C_{p, N} = P_1 - P_p$ up to 2-, 3-primary groups, $\langle C_{p, N} \rangle[\m] \simeq \zell \subseteq J[\m]^{I_p}$ maps isomorphically into $\Phi_p[\m]$. (See Remark \ref{cuspcomp}.) 

Let $\gamma \in I_p$. Since $J[\m]$ is an extension of $\muell^{\oplus a}$ by $\zell$, for any $x \in J[\m]$ we get $(\gamma-1)x \in \zell \inj \Phi_p$.
On the other hand, by Grothendieck \cite{Gro72} (cf. \cite[\textsection 5]{MR91}), the group of $\ell$-torsion points of $J$, $J[\ell]$, has the following filtration. 
$$
J[\ell]^t \subseteq J[\ell]^f \subseteq J[\ell],
$$
where $J[\ell]^f$ can be regarded as a lift of $J^0[\ell]$. Moreover, the quotient $J[\ell]/J[\ell]^f$ is unramified. Now, we can think of $J[\m]$ as a subgroup of $J[\ell]$. Then, 
for any $x \in J[\m]$ we get $(\gamma-1) x = 0 \in J[\ell]/J[\ell]^f$ and hence $(\gamma-1)x \in J[\ell]^f \simeq J^0[\ell]$. 
Therefore we get $(\gamma-1)x \in J^0 \cap \Phi_p = 0$, which implies that $J[\m]$ is unramified at $p$. Thus, we get $p \not\in S_{\m}$ and $\varpi_{\ell}(S_{\m})=0$. 

If $\dim J[\m] \geq 3$, then it contains a non-trivial extension of $\muell$ by $\zell$ over $\Z_{S_{\m}}$. However, the dimension of $\Ext_{S_{\m}}(\muell, \zell)$ is $\varpi_{\ell}(S_{\m})=0$, which is a contradiction. Hence $\dim J[\m]=2$ and $J[\m] \simeq \zell \oplus \muell$. 
\end{enumerate}

\item 
Let $q = p_t$ for simplifying notation. Since $\ell \mid \varphi(N)\psi(q)$, $q \equiv -1 \modl$.
As in \textsection 2.5, let $J^0$, $\Phi_q$, and $T$ denote the identity component,
the component group, and the torus of $J_{/{\F_q}}$, respectively. Then, by Proposition \ref{comp}, 
$\Phi_q[\m]=0$. Since on $T[\m]$, $\Frob_q$ acts by $qU_q \equiv 1 \modl$ and $q \equiv -1 \modl$,
$T[\m]$ cannot contain $\muell$. Thus, $\dim T[\m] \leq 1$. 
Since $\ell \nmid \varphi(N)$, the index of the ideal $I_{N/q}=(U_{p_i}-1, ~T_r-r-1 ~:~ 1 \leq i \leq s, ~\text{for all primes}~ r \nmid N/q )$ of $\T_{N/q}$ is prime to $\ell$. Therefore $J_0(N/q)^2[\m]=0$, which implies that
$J_{/{\F_q}}[\m] \simeq J[\m]^{I_q}$ is at most of dimension 1 over $\T/{\m} \simeq \F_{\ell}$.
Since $J[\m]$ is an extension of $\muell^{\oplus a}$ by $\zell$ for some $a \geq 1$, $J[\m]^{I_q}$ 
is at least of dimension $a$, i.e., $J[\m]$ is of dimension 2. 
\end{enumerate}
\end{proof}

\begin{rem}\label{rem}
Because we assume that $p_j \not \equiv 1 \modl$ for a prime $s<j \leq t$, unramifiedness of $J[\m]$ at $p$ in the third case of the proof of (1) follows from the assumption $s(\m)=s=1$ only.
\end{rem}

By using a similar argument as above we can prove a bound on the dimension of $J[\m]$.
\begin{thm} \label{failmulti}
We have
$$
\max\{1+\varpi_0(\m), 2\} \leq \dim J[\m] \leq 1+\varpi_0(\m)+\varpi_{\ell}(S_{\m}).
$$

\end{thm}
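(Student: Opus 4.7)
The plan is to carry out the analysis of $J[\m]$ uniformly, extending the case-by-case argument of Theorem~\ref{multi}. The central object is the short exact sequence
\[
0 \to \zl \to J[\m] \to A \to 0,
\]
in which $A$ is forced to be $\mul^{\oplus r}$ for some $r \geq 1$, and controlling $\Sigma_N \cap J[\m]$ on one side and $\Ext^1_{S_{\m}}(\mul, \zl)$ on the other will yield both bounds.

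For the lower bound, I would first cite Lemma~7.7 of Chapter~II in \cite{M77} to conclude $\dim J[\m] \geq 2$ from the fact that the $\m$-adic Tate module of $J$ is free of rank $2$ over $\T_{\m}$. To show $\dim J[\m] \geq 1 + \varpi_0(\m)$, the key step is to identify $\dim \Sigma_N[\m] = \varpi_0(\m)$. Using Proposition~\ref{Shimura} and the definition of $\m$, one verifies that $\Sigma_{p_i}[\m] \neq 0$ forces both $p_i \equiv 1 \modl$ and $p_j \equiv 1 \modl$ for every $j \leq s$ with $j \neq i$. Combined with the standing assumption $p_j \not\equiv 1 \modl$ for $j > s$, this shows that $\Sigma_N[\m] \simeq \mul^{\oplus s}$ if all of $p_1, \ldots, p_s$ are $\equiv 1 \modl$ (in which case $\varpi_0(\m) = s$), and $\Sigma_N[\m] = 0$ otherwise (in which case $\varpi_0(\m) = 0$). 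Together with the $\zl$-submodule provided by $\langle C_{M,N} \rangle[\m]$, which is nonzero because $\ell$ divides the index of $I_M$ in $\T$ by Theorem~\ref{index}, one concludes $\zl \oplus \mul^{\oplus \varpi_0(\m)} \hookrightarrow J[\m]$.

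For the upper bound, I would invoke Proposition~14.1 of Chapter~II in \cite{M77} to establish that all Jordan--H\"older factors of $J[\m]$ are $\zl$ or $\mul$, and the $q$-expansion principle (Corollary~14.8 of Chapter~II, applicable because $T_{\ell} \equiv 1 \pmod{\m}$) to bound the number of $\zl$-factors by one. The constancy theorem (Lemma~3.5 of Chapter~I in \cite{M77}) then forces $A \simeq \mul^{\oplus r}$ with $r = \dim J[\m] - 1$. Applying $\Hom_{G_{\Q}}(\mul, -)$ to the short exact sequence produces the exact sequence
\[
0 \to \Hom_{G_{\Q}}(\mul, J[\m]) \to \Hom_{G_{\Q}}(\mul, \mul^{\oplus r}) \to \Ext^1_{S_{\m}}(\mul, \zl),
\]
whose rightmost term has dimension $\varpi_{\ell}(S_{\m})$ by Brumer--Kramer. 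The leftmost term has $\F_{\ell}$-dimension equal to that of the maximal multiplicative subgroup of $J[\m]$, which by Vatsal's theorem equals $\dim \Sigma_N[\m] = \varpi_0(\m)$. A dimension count then gives $r \leq \varpi_0(\m) + \varpi_{\ell}(S_{\m})$, establishing the upper bound.

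The main subtlety is ensuring that the $\Ext^1$ term in the long exact sequence indeed agrees with the Brumer--Kramer Ext group $\Ext^1_{S_{\m}}(\mul, \zl)$, namely that the relevant extension classes represent $\ell$-torsion extensions of $\mul$ by $\zl$ unramified outside $S_{\m}$. This follows from the fact that $J[\m]$ is killed by $\ell$ and unramified outside $S_{\m}$ by definition, so every extension obtained by pulling back a subspace of $A$ to $J[\m]$ inherits both properties.
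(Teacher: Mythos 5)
Your proposal is correct and takes essentially the same route as the paper: both proofs rest on the same inputs (Mazur's Jordan--H\"older analysis and the $q$-expansion principle giving at most one $\zl$ factor, the constancy theorem giving the quotient $\mul^{\oplus r}$, the cuspidal group supplying $\zl$, Proposition \ref{Shimura} giving $\dim\Sigma_N[\m]=\varpi_0(\m)$, Vatsal's maximality of the Shimura subgroup, and the Brumer--Kramer computation of $\dim\Ext_{S_{\m}}(\mul,\zl)=\varpi_{\ell}(S_{\m})$). The only difference is packaging: the paper splits into the cases $s(\m)<s_0(\m)$ and $s(\m)=s_0(\m)$, peels off $\Sigma_N[\m]\simeq\mul^{\oplus s}$ and argues via linear dependence of the pullback extensions $W_k$, while you run a single dimension count through the exact sequence $0\to\Hom(\mul,J[\m])\to\Hom(\mul,\mul^{\oplus r})\to\Ext_{S_{\m}}(\mul,\zl)$ -- the same underlying linear algebra.
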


\begin{proof}
If $s_0(\m)< s(\m)$, $\Sigma_N[\m]=0$ by Proposition \ref{Shimura}. 
Thus, $\muell \nsubseteq J[\m]$ but $\zell \subseteq J[\m]$. Let
$$
\xymatrix{
0 \ar[r] & \zell \ar[r] & J[\m] \ar[r] & \muell^{\oplus a} \ar[r] & 0 \\
0 \ar[r] & \zell \ar[r]\ar@{=}[u] & W_k \ar[r] \ar[u]_-{i_k} & \muell \ar[r] \ar[u]_-{i_k} & 0,
}
$$
where $W_k$ is the pullback of $J[\m]$ by the map $i_k : \muell \rightarrow \muell^{\oplus a}$,
which is an embedding into the $k^{\rm{th}}$ component for $1 \leq k \leq a$.
Then $W_k$ is an extension in $E=\Ext_{S_{\m}}(\muell, \zell)$. If $a > \varpi_{\ell}(S_{\m})=\dim E$, the extensions 
$W_k$ for all $1 \leq k \leq a$ are linearly dependent over $\F_{\ell}$. Thus, $J[\m]$ contains $\muell$, which is a contradiction. Therefore $\dim J[\m] = 1+ a \leq 1 + \varpi_{\ell}(S_{\m})$.

If $s(\m)=s_0(\m)=s$, then $\varpi_0(\m)=s$ and we have
$$
\Sigma_N[\m] \simeq \displaystyle\bigoplus_{i=1}^s \Sigma_{p_i}[\m] \simeq \muell^{\oplus s}
$$
by Proposition \ref{Shimura}. Thus, $\dim J[\m] \geq 1+\varpi_0(\m)$. Let $J[\m] = \muell^{\oplus s} \oplus V$ for some $V$. Then $\muell \nsubseteq V$, 
$\zell \subseteq V$, and $V$ is an extension of $\muell^{\oplus a}$ by $\zell$ (over $\Z_{S_{\m}}$). 
By the same argument as above, if $\dim V > 1+ \varpi_{\ell}(S_{\m})$ then $V$ contains
$\muell$, which is a contradiction. Therefore $\dim J[\m] = s + \dim V \leq s+1+ \varpi_{\ell}(S_{\m})$.
\end{proof}

\subsection{More on level $pq$}
Let $N=pq$, $p=p_1$ and $q=p_2$. (Hence $t=2$.) Then $s=1$ or $s=2$. 
Let $S_N:=\{ p, q \}$, $\T:=\T(pq)$ and $J:=J_0(pq)$.
\subsubsection{Case $s=1$}
Since $s=1$, assume that $q \not\equiv 1 \modl$ and $\ell \mid (p-1)(q^2-1)$. Let $\m:=(\ell, ~U_p-1, ~U_q-q, ~T_r-r-1 \mid ~\text{for all primes}~ r \nmid pq )$.
\begin{thm}\label{s1}
Then, 
\begin{enumerate}
\item In all cases below, $J[\m]$ is unramified at $p$.
\item If $p\not\equiv 1 \modl$, $\dim J[\m]=2$. 
\item If $p \equiv 1 \modl$ and $q \not\equiv -1 \modl$, $\dim J[\m]=2$.
\item Assume that $p \equiv 1 \modl$ and $q \equiv -1 \modl$. 
\begin{enumerate}
\item If $J[\m]$ is unramified at $q$, then $\dim J[\m]=2$.
\item If $J[\m]$ is ramified at $q$, then $\dim J[\m]=3$.
\end{enumerate}
\end{enumerate}
\end{thm}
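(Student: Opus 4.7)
The plan is to deduce parts (2) and (3) from Theorem \ref{multi}, prove part (1) uniformly in all three cases via the component group of $J_{/\F_p}$, and handle part (4) by combining part (1) with Theorem \ref{failmulti} and the Brumer--Kramer extension count.

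For part (2), $p\not\equiv 1\pmod\ell$. Maximality of $\m$ and Theorem \ref{index} force $\ell\mid (p-1)(q^2-1)$; combined with $\ell\nmid (p-1)(q-1)$ this yields $q\equiv -1\pmod\ell$. In particular $\ell\nmid\varphi(N)$ and $t=s+1$, so Theorem \ref{multi}(2) applies and gives $\dim J[\m]=2$. For part (3) the assumptions imply $\varpi_{\ell}(S_N)=1$, and Theorem \ref{multi}(1) gives $\dim J[\m]=2$.

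For part (1) I would adapt the argument of Case 3 in the proof of Theorem \ref{multi}. The single input needed is that $\Phi_p[\m]\cong \zl$ is generated by the image of $C_{p,N}$. In each of the three cases we have $q\not\equiv 1\pmod\ell$, hence $\ell\nmid\varphi(N/p)$. By Proposition \ref{appendix} the component group $\Phi_p$ is, up to $2$- and $3$-primary torsion, cyclic and generated by $C_{p,N}$; Proposition \ref{comp} identifies the Hecke action on $\Phi_p$ with the Eisenstein characters defining $\m$; and Remark \ref{cuspcomp} then provides an isomorphism $\langle C_{p,N}\rangle[\ell]\simeq\Phi_p[\ell]=\Phi_p[\m]\cong\zl$. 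Mazur's argument on pp.~125--126 of \cite{M77} then shows $J[\m]^{I_p}=J[\m]$, so $J[\m]$ is unramified at $p$. (This is the content of Remark \ref{rem}: the hypothesis $p\equiv 1\pmod\ell$ is not actually needed.)

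For part (4) we have $s(\m)=s_0(\m)=1$, hence $\varpi_0(\m)=1$. By Proposition \ref{Shimura}, $\Sigma_N[\m]=\Sigma_p[\m]\cong\mul$ (the factor $\Sigma_q[\m]$ vanishes since $q\not\equiv 1\pmod\ell$); as in the proof of Theorem \ref{failmulti} we obtain $J[\m]=\mul\oplus K$ where $K$ is an extension of $\mul^{\oplus r}$ by $\zl$ with $\mul\not\subseteq K$. By part (1), $S_{\m}\subseteq\{q,\ell\}$, so $\varpi_{\ell}(S_{\m})\in\{0,1\}$ according as $J[\m]$ is unramified or ramified at $q$, and the Brumer--Kramer bound $r\leq\varpi_{\ell}(S_{\m})$ from the proof of Theorem \ref{failmulti} finishes the argument: if $J[\m]$ is unramified at $q$, then $r=0$ and $\dim J[\m]=2$; if $J[\m]$ is ramified at $q$, then $r\leq 1$, and $r=0$ would force $J[\m]\cong\mul\oplus\zl$ (the Shimura and cuspidal subgroups alone already accounting for both dimensions), which is unramified outside $\ell$ and so contradicts the ramification at $q$, whence $r=1$ and $\dim J[\m]=3$. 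The main obstacle is running the component-group argument at $p$ uniformly in case (2), where the absence of the congruence $p\equiv 1\pmod\ell$ requires the mild refinement of Case 3 of Theorem \ref{multi} already recorded in Remark \ref{rem}; once that is in hand, the rest is bookkeeping with Theorem \ref{failmulti}.
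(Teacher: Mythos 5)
Your proposal is correct and follows essentially the same route as the paper: parts (2) and (3) via Theorem \ref{multi}(2) and \ref{multi}(1), part (1) via the component-group/Serre--Tate argument at $p$ (exactly the content of Remark \ref{rem}, valid since $q\not\equiv 1\pmod\ell$ gives $\ell\nmid\varphi(N/p)$), and part (4) via $\Sigma_N[\m]\simeq\mul$ together with the Theorem \ref{failmulti}/Brumer--Kramer bound. Your bookkeeping in (4)(b) (ruling out $r=0$ by ramification at $q$) is just a mild rephrasing of the paper's count $\dim J[\m]^{I_q}\geq 2$ plus $\dim J[\m]\leq 3$, so no substantive difference.
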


\begin{proof}
\begin{enumerate}
\item 
This follows from Remark \ref{rem}. 

\item If $p\not\equiv 1 \modl$, since $\ell \mid (p-1)(q^2-1)$, $q \equiv -1 \modl$.
The claim holds by Theorem \ref{multi}(2) since $\ell \nmid \varphi(pq)$.

\item 
Assume that $p \equiv 1 \modl$ and $q \not\equiv -1 \modl$. 
The claim holds by Theorem \ref{multi}(1) since $\varpi_{\ell}(S_N) = 1$.

\item Since $p \equiv 1 \modl$, $\Sigma_{pq}[\m]=\Sigma_{p}[\m] \simeq \muell$. Thus, 
$J[\m]$ contains $\muell$ but $\muell \oplus \muell \nsubseteq J[\m]$.

\begin{enumerate}
\item Since $J[\m]$ is unramified at both $p$ and $q$, it is unramified everywhere (except $\ell$).
Therefore it is a direct sum of $\zell$ and $\muell^{\oplus a}$ (cf. \cite[chap I, Proposition 2.1]{M77}). Hence $\dim J[\m]=2$. 

\item 
In this case, $s(\m)=1=s_0(\m)=\varpi_0(\m)$ and $\varpi_{\ell}(S_{\m}) \leq 1$ since $J[\m]$ is unramified at $p$.
By Theorem \ref{failmulti}, $\dim J[\m] \leq 3$.
We know that $J[\m]$ contains $\zell$ and $\muell$ from the cuspidal group and the Shimura subgroup, respectively. Hence, $\dim J[\m]^{I_q} \geq 2$.
Since $J[\m]$ is ramified at $q$, $\dim J[\m]= 3$. 
\end{enumerate}
\end{enumerate}
\end{proof}

\begin{rem}
Let $q\equiv -1 \modl$. 
Note that $J[\m]$ does not depend on $p$ if $p \not\equiv 1 \modl$. It is a (unique) non-trivial extension of $\muell$ by $\zell$ over $\Z[1/q]$.
\end{rem}

\begin{exam}
In the case (4), we can compute the dimension of $J[\m]$ using SAGE \cite{SAGE}. For primes $p, q <100$ and $\ell <10$, we get $\dim J[\m]=3$ only
when $(p, q, \ell)=(41, 19, 5)$, $(61, 79, 5)$, $(29, 97, 7)$, $(43, 13, 7)$ and $(43, 41, 7)$. Thus,
we know that $J[\m]$ is ramified at $q$ in each of those cases.
\end{exam}

\begin{rem}
The structure of $J_0(43\times 13)[\m]$ for an Eisenstein $\m$ of residue characteristic $7$ is studied by Calegari and Stein \cite{CS08}.
We proved their result about its ramification (at $13$) from the dimension computation.
By Theorem \ref{s1}(1), we know that it is unramified at $43$.
\end{rem}

\subsubsection{Case $s=2$}
Let $\m:=(\ell, ~U_p-1, ~U_q-1, ~T_r-r-1 ~:~ \text{ for all primes}~ r \nmid pq )$. Since $\m$ is maximal if and only if $\ell \mid (p-1)(q-1)$, assume that $p \equiv 1 \modl$. 
\begin{thm}\label{s2}
Then, 
\begin{enumerate}
\item If $q \not\equiv \pm 1 \modl$, $\dim J[\m]=2$ and $J[\m]$ is ramified at $p$. 
\item Assume that $q \equiv -1 \modl$. Then $J[\m]$ is ramified at $p$. 
\begin{enumerate}
\item If $J[\m]$ is unramified at $q$, then $\dim J[\m]=2$.
\item If $J[\m]$ is ramified at $q$, then $\dim J[\m]=3$. 
\end{enumerate} 
\item If $q \equiv 1 \modl$, then $\dim J[\m]$ is either $4$ or $5$.
\end{enumerate}
\end{thm}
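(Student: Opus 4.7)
My plan is to treat the three sub-cases of Theorem \ref{s2} uniformly by computing the invariants $s(\m), s_0(\m), \varpi_0(\m), \varpi_\ell(S_N)$ and feeding them into Theorem \ref{failmulti} for preliminary bounds on $\dim J[\m]$, then refining using Proposition \ref{Shimura} (Shimura subgroup), Proposition \ref{cusp} (cuspidal divisors), Proposition \ref{comp} (component groups at $p$ and $q$), Vatsal's maximality of $\Sigma_N$ among multiplicative subgroups of $J$, and the Brumer--Kramer formula $\dim \Ext_S(\mul, \zl) = \varpi_\ell(S)$. Note that $s_0(\m) = 2$ in every sub-case because $\m$ contains both $U_p - 1$ and $U_q - 1$, so only $s(\m)$ varies with the case.

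For Case (1), the hypotheses give $\varpi_0(\m) = 0$ and $\varpi_\ell(S_N) = 1$, so Theorem \ref{failmulti} immediately pins $\dim J[\m] = 2$ down. I will then note that $\Sigma_N[\m] = 0$ by Proposition \ref{Shimura} (the Hecke eigenvalue conditions fail in both slots), so $\mul \not\subset J[\m]$ by Vatsal; combined with $\zl \subset J[\m]$ from $C_{pq, pq}$, this realizes $J[\m]$ as a nontrivial extension $0 \to \zl \to J[\m] \to \mul \to 0$. Since $q \not\equiv \pm 1 \pmod \ell$, Brumer--Kramer forces the ramification locus $S_\m$ to contain $p$, giving the ramification claim.

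For Case (2), again $\varpi_0(\m) = 0$ and $\Sigma_N[\m] = 0$, but now $\varpi_\ell(S_N) = 2$, so $\dim J[\m] \in \{2, 3\}$. The crucial extra step is to prove $J[\m]$ is \emph{always} ramified at $p$. I plan to do this by reduction at $p$: Proposition \ref{comp} gives $\Phi_p[\m] = 0$ since $U_q - 1$ acts on $\Phi_p$ as $q - 1 \equiv -2 \pmod \ell$, which is invertible; Theorem \ref{index} at level $q$ shows that the Eisenstein maximal ideal of $\T_q$ with residue characteristic $\ell$ fails to exist (as $\ell \nmid q - 1$), so $J_0(q)^2[\m] = 0$; hence $J^0_{/\F_p}[\m] = T[\m]$ is purely toric, with $\Frob_p \equiv p U_p \equiv 1 \pmod \ell$ by Remark \ref{torus}. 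A Mazur-style local-to-global argument (after pp.\ 125--126 of \cite{M77}) will then show that unramifiedness of $J[\m]$ at $p$ would force the extension $0 \to \zl \to J[\m] \to \mul^r \to 0$ to split globally, contradicting $\mul \not\subset J[\m]$. With ramification at $p$ in hand, case (a) follows since $\varpi_\ell(S_\m) = 1$ forces $\dim J[\m] \le 2$. For case (b), I would argue that ramification at both $p$ and $q$ is produced by geometrically independent mechanisms (the cuspidal-toric structure at $p$ and its analogue at $q$), yielding two $\F_\ell$-linearly independent classes inside the two-dimensional $\Ext_{\{p, q, \ell\}}(\mul, \zl)$, and hence $r = 2$ and $\dim J[\m] = 3$.

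For Case (3), I would exploit the observation that $q \equiv 1 \pmod \ell$ makes the ideals $I_p = (U_p - 1, U_q - q, T_r - r - 1)$ and $I_{pq} = (U_p - 1, U_q - 1, T_r - r - 1)$ agree modulo $\ell$. By Proposition \ref{cusp}, $I_p$ annihilates the cuspidal divisor $C_{p, pq}$, so $C_{p, pq}$ lies in $J[\m]$ alongside $C_{pq, pq}$. These produce $\F_\ell$-linearly independent copies of $\zl$ because their images in $\Phi_p$ differ: $C_{pq, pq}$ maps to $0$ (the two cross-component pairs cancel, using $P_1 \equiv P_q$ and $P_p \equiv P_{pq}$ in $\Phi_p$), while $C_{p, pq} = P_1 - P_p$ maps to a generator of $\Phi_p[\m] \simeq \zl$ by Proposition \ref{comp}. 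Combined with $\Sigma_N[\m] \simeq \mul^{\oplus 2}$, this forces $\dim J[\m] \ge 4$ and hence $\dim J[\m] \in \{4, 5\}$. The hardest step will be Case (2)(b), where verifying that ramification at $q$ truly contributes a second independent extension class (rather than merely enriching the ramification pattern of a single class) requires a delicate Galois-theoretic analysis going beyond the Brumer--Kramer upper bound.
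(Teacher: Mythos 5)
Your reductions of Case (1) and Case (2)(a) are fine and essentially the paper's: Theorem \ref{failmulti} plus $\Sigma_N[\m]=0$, $\zl\subseteq J[\m]$ from the cuspidal group, and the Brumer--Kramer computation of $\Ext_{S}(\mul,\zl)$ give $\dim J[\m]=2$ and ramification at $p$ there. But the two hard parts are not proved. In Case (2)(b) you concede that you cannot show the ramification at $q$ contributes a second independent extension class, and that is precisely the content of the statement; moreover your plan to prove ``ramified at $p$ always'' by a local argument at $p$ breaks down in subcase (b): unramifiedness at $p$ does not force the extension $0\to\zl\to J[\m]\to\mul^{\oplus r}\to 0$ to split globally, because $\Ext_{\{q,\ell\}}(\mul,\zl)$ is one-dimensional when $q\equiv -1\modl$. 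The paper's route is different and is the missing idea: reduce mod $q$, use Ribet's exact sequence $0\to T\to J^0\to J_0(p)^2\to 0$ with the map $\bmat 1 & \Ver \\ \Ver & 1\emat$ on the old part, Mazur's $J_0(p)[\m_p]\simeq \zl\oplus\mul$ to see the old part contributes one dimension to $J[\m]^{I_q}$, and the faithfulness of the $q$-new quotient of $\T$ on the torus to get $T[\m]\neq 0$ (ramification at $q$ forces $\m$ to be $q$-new); this yields $\dim J[\m]^{I_q}\geq 2$, hence $\dim J[\m]\geq 3$, with $\leq 3$ from Theorem \ref{failmulti}, and only then is ramification at $p$ deduced (if $J[\m]$ were unramified at $p$, Theorem \ref{failmulti} would force dimension $2$).

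Case (3) as you propose it is actually wrong, not just incomplete. When $q\equiv 1\modl$ the order-$\ell$ subgroup of $\langle C_{p,pq}\rangle$ maps to zero in $\Phi_p$: by Remark \ref{cuspcomp} the kernel of $\langle C_{p,pq}\rangle\twoheadrightarrow\Phi$ has order $\varphi(q)=q-1$ up to $2$- and $3$-factors, which is divisible by $\ell$, and in a cyclic group the unique order-$\ell$ subgroup lies in every subgroup of order divisible by $\ell$. So your mechanism for distinguishing the two constant subgroups fails; worse, $J[\m]$ can have at most one $\zl$ among its Jordan--H\"older factors (the ordinarity/$q$-expansion-principle argument quoted in the proof of Theorem \ref{multi}), so two independent copies of $\zl$ cannot exist, and your count only reaches $\dim\geq 3$. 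The paper instead gets $\dim J[\m]\geq 4$ from $\Sigma_{pq}[\m]\simeq\mul\oplus\mul$ together with the fact (Ribet, \cite{Y14}) that $\m$ is new, the rank-two Tate-module argument applied to $J_{\new}$, and the observation that $U_p+w_p$ and $U_q+w_q$ annihilate $J_{\new}$ but act invertibly on $\Sigma_q[\m]$ and $\Sigma_p[\m]$, so $\Sigma_{pq}[\m]\cap J_{\new}=0$; the upper bound $5$ then comes from Theorem \ref{failmulti}. You would need to import these inputs (or equivalents) to close Cases (2)(b) and (3).
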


\begin{proof}
Since $p\equiv 1 \modl$, there is the Eisenstein maximal ideal $\m_p$ of level $p$ of 
residue characteristic $\ell$ and $J_0(p)[\m_p] \simeq \zell \oplus \muell$ by Mazur
\cite[chap II, Corollary 16.3]{M77}.

\begin{enumerate}
\item 
If $q \not \equiv \pm 1\modl$, by Theorem \ref{multi}(1), $\dim J[\m]=2$ and 
$J[\m]$ is ramified at $p$ because it is a unique non-trivial extension of $\Ext_{\{ p \}}(\muell, \zell)$.

\item Assume that $q \equiv -1 \modl$. Then
$\Sigma_{pq}[\m]=0$, in other words, $\muell \nsubseteq J[\m]$.

\begin{enumerate}
\item 
Assume that $J[\m]$ is unramified at $q$.
In this case, $s(\m)=1<s_0(\m)=2$ and $\varpi_{\ell}(S_{\m}) \leq 1$. Therefore $\dim J[\m]=2$ by Theorem \ref{failmulti} and $J[\m]$ is ramified at $p$ (and $\ell$).

\item 
Assume that $J[\m]$ is ramified at $q$. 
Let $T$, $J^0$ be the torus, the identity component of $J_{/{\F_q}}$, respectively.  
Then there is an exact sequence
$$
\xymatrix{
0 \ar[r] & T \ar[r] & J^0 \ar[r]_-\pi & J_0(p)\times J_0(p) \ar[r] & 0 \\
& & A:=J_0(p) \times J_0(p) \ar[u]_-{\alpha_{/{\F_q}}} \ar[ur]_-g
}
$$
and $g=\pi \circ \alpha_{/{\F_q}}$ is 
$\mat {1}{\Ver}{\Ver}{1}$, where $\Ver$ is the Verschiebung morphism in characteristic $q$ \cite[Lemma 1.1]{R90m}. Note that $\alpha_{/\F_q}$ is equal to the map (in characteristic $q$) induced by $\gamma_q^*$ in \textsection \ref{old and new}.
By taking the kernel of $\m$, we get 
$$
\xymatrix{
0 \ar[r] & T[\m] \ar[r] & J^0[\m] \ar[r]_-\pi & J_0(p)\times J_0(p)[\m].
}
$$
Since $J_0(p)[\m_p]\simeq \zell \oplus \muell$, $A[\m]= \{ (x, -x) ~:~ x\in J_0(p)[\m_p] \}$ and $\Ver$ acts as $q$ on $\zell$, $\{(x, -x) ~:~ x \in \zell \}$ maps injectively by $g$ to $J_0(p)^2[\m]$. 
Thus, the image of $\pi$ is at least 1-dimensional. 

Since $J[\m]$ is ramified at $q$, $\m$ is $q$-new.
Note that $X=\Hom(T, \Gm)$ is of rank 1 over $\T^{q\hyp\new}$ in the sense of Mazur (cf. \cite[\textsection 2]{R99}). Thus, we get $T[\m] \neq 0$ and 
therefore $J^0[\m]$ is at least 2-dimensional. 
Since $J_{/{\F_q}}[\m] \simeq J[\m]^{I_q}$ by Serre-Tate and 
$J[\m]$ is ramified at $q$, 
$$
\dim J[\m] \geq \dim J[\m]^{I_q}+1 \geq \dim J^0[\m]+1 \geq 3.
$$

On the other hand, the dimension of $J[\m]$
is at most 3 by Theorem \ref{failmulti}, and hence it is $3$. 

Moreover if it is unramified at $p$, then $\varpi_{\ell}(S_{\m}) \leq 1$ and $\varpi_0(\m)=0$. So, by Theorem \ref{failmulti}, $\dim J[\m]=2$, which is a contradiction. Therefore $J[\m]$ is also ramified at $p$.
\end{enumerate}

\item Assume that $q \equiv 1 \modl$. Then $\Sigma_{pq}[\m] \simeq \Sigma_p[\m]\oplus \Sigma_q[\m]\simeq \muell\oplus \muell \subseteq J[\m]$. 
By the result of Ribet \cite{Yoo14a}, $\m$ is new. Thus, $J_{\new}[\m]$ is non-trivial,
where $J_{\new}$ is the new subvariety of $J$. By the same argument as in 
\cite[chap II, \textsection 7]{M77}, $\Ta_{\m} J_{\new} \otimes \Q$ is free of rank 2 over $\T^{\new} \otimes \Q$. 
Therefore, $J_{\new}[\m]$ is at least of dimension 2 by Nakayama's lemma. Since $U_p+w_p$ acts by 2 on $\Sigma_{q}[\m]$, it is an isomorphism because $\ell$ is odd. Moreover, 
since $U_p+w_p$ and $U_q+w_q$ annihilate $J_{\new}$, $\Sigma_{q}[\m] \cap J_{\new} =0$. Similarly, we have $\Sigma_{p}[\m] \cap J_{\new} =0$. Thus, $\dim J[\m] \geq 2+2=4$. By Theorem \ref{failmulti}, the result follows.

\end{enumerate}
\end{proof}

\begin{cor}
If $q \equiv -1 \modl$ but $q$ is not an $\ell^{\rm{th}}$ power modulo $p$, then
$\dim J[\m]=2$. 
\end{cor}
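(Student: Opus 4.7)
The plan is to invoke Theorem~\ref{s2}(2): since $p \equiv 1 \modl$ (forced by the maximality of $\m$) and $q \equiv -1 \modl$ by hypothesis, $\dim J[\m] \in \{2, 3\}$, with the value $3$ occurring precisely when $J[\m]$ is ramified at $q$. Hence it suffices to prove that $J[\m]$ is unramified at $q$ under the hypothesis that $q$ is not an $\ell$-th power modulo $p$.

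As in the proof of Theorem~\ref{s2}(2)(b), ramification of $J[\m]$ at $q$ is equivalent to $T[\m] \neq 0$ for the toric part $T$ of $J_{/\F_q}$; since the $q$-new Hecke quotient $\T^{q\hyp\new}$ acts faithfully on $T$, this in turn is equivalent to the existence of a new mod-$\ell$ cuspidal eigenform $f$ at level $pq$ with $a_r(f) \equiv r+1 \modl$ for primes $r \nmid pq$ and $U_p(f) \equiv U_q(f) \equiv 1 \modl$. Equivalently, $f$ has Atkin-Lehner eigenvalues $w_p(f) = w_q(f) = -1$.

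The strategy is to rule out such an $f$ by combining Ribet's level-raising theorem with an Atkin-Lehner sign analysis. Level-raising from the Eisenstein series at level $p$ to level $pq$ is compatible because $q \equiv -1 \modl$ and the Eisenstein eigensystem satisfies $a_q \equiv q + 1 \equiv 0 \modl$. The resulting new form at level $pq$ has $U_q$-eigenvalue in $\{\pm 1\}$; by Theorem~\ref{thm:neweisen}, the sign $U_q \equiv -1$ is always realized, and the key new input is to show that the other sign $U_q \equiv 1$ can be realized if and only if $q$ is an $\ell$-th power modulo $p$. This equivalence should follow from a cohomological obstruction expressed as a reciprocity pairing between the Kummer class of $p$ (encoding the ramification of $J[\m]$ at $p$) and the Frobenius at $q$.

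Granting this equivalence, the hypothesis forces the non-existence of $f$ with $U_q \equiv 1$, so $\m$ is not $q$-new, $T[\m] = 0$, $J[\m]$ is unramified at $q$, and $\dim J[\m] = 2$ by Theorem~\ref{s2}(2)(a). The hard part is the rigorous identification of the Atkin-Lehner sign obstruction with the $\ell$-th power residue condition: this is a genuine new input beyond the content of the paper, and it would require either a direct analysis of the $q$-new component of the Hecke algebra at level $pq$ via Ribet's techniques or a careful tracking of the local monodromy of $J[\m]$ at $q$ through the Galois representation theory of the Kummer extensions $\Q(\mu_\ell, p^{1/\ell})$ and $\Q(\mu_\ell, q^{1/\ell})$.
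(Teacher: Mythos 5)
Your reduction is the right one, and it matches the shape of the paper's argument: by Theorem~\ref{s2}(2) everything comes down to showing that $J[\m]$ is unramified at $q$, i.e.\ that $\m$ is not $q$-new. But the step you yourself flag as ``the hard part'' --- that the new Eisenstein eigensystem with $U_p\equiv U_q\equiv 1 \modl$ (equivalently $w_p=w_q=-1$) occurs at level $pq$ if and only if $q$ is an $\ell$-th power modulo $p$ --- is left entirely unproven; you only gesture at a ``reciprocity pairing between the Kummer class of $p$ and the Frobenius at $q$.'' As written, the proposal is therefore not a complete proof: the whole content of the corollary is concentrated in exactly that deferred step. What you call ``a genuine new input beyond the content of the paper'' is in fact the theorem of Ribet that the paper cites (\cite{Y14}, see also \cite{R08}), and which is already invoked in the proof of Theorem~\ref{s2}(3): if $q$ is not an $\ell$-th power modulo $p$, then $\m$ is not new. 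So the needed input is citable rather than new, but your write-up neither proves it nor cites it, and the sketch you give (level-raising compatibility because $q+1\equiv 0\modl$, plus an Atkin--Lehner sign analysis via Theorem~\ref{thm:neweisen}) does not by itself produce the residue condition; only one implication (not an $\ell$-th power $\Rightarrow$ no such new form) is needed, and that is precisely Ribet's criterion.

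The paper's own route to unramifiedness is also shorter than your torus argument: granted Ribet's result that $\m$ is not new, it observes that since $q\equiv -1 \modl$ and $\ell>3$ there is no Eisenstein maximal ideal of residue characteristic $\ell$ at level $q$ (the Eisenstein index there is the numerator of $\frac{q-1}{12}$, prime to $\ell$), so $\m$ must be $q$-old; hence $J[\m]$ is unramified at $q$ and case (2)(a) of Theorem~\ref{s2} applies. Your passage through $T[\m]$ and the faithfulness of the $q$-new quotient on the torus is consistent with the implication actually used in the paper ($J[\m]$ ramified at $q$ $\Rightarrow$ $\m$ is $q$-new), so that part is fine; the gap is solely the unproved, uncited equivalence with the $\ell$-th power condition. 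If you either cite \cite{Y14} (or \cite{R08}) for that statement, or carry out the cohomological argument you sketch, the proof closes and is essentially the paper's.
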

\begin{proof}
By Ribet \cite{Yoo14a}, if $q$ is not an $\ell^{\rm{th}}$ power modulo $p$, then $\m$ is not new.
Since $q \equiv -1 \modl$, there is no Eisenstein maximal ideal of level $q$, hence $\m$ is $q$-old. Therefore $J[\m]$ is unramified at $q$. Thus, the claim follows from the case (2)(a) of Theorem \ref{s2}.
\end{proof}

\begin{rem}
 In the case (2)(resp. (3)) of Theorem \ref{s2}, the computation with SAGE \cite{SAGE} suggests that $\dim J[\m]=2$ (resp. $\dim J[\m]=5$). Now this observation is proved by Ribet and the author \cite{RY14}. Namely, 
$\dim J[\m]=2$ if $q \not \equiv 1 \modl$ and $\dim J[\m]=5$ if $q \equiv 1 \modl$.
\end{rem}

\bibliographystyle{annotation}

\end{document}